\newtheorem{theorem}{Theorem}
\newtheorem{corollary}[theorem]{Corollary}
\newtheorem{lemma}[theorem]{Lemma}
\newtheorem{proposition}[theorem]{Proposition}
\theoremstyle{definition}
\newtheorem{definition}[theorem]{Definition}
\newtheorem{example}[theorem]{Example}
\newtheorem{remark}[theorem]{Remark}
\begin{document}

\title{On pseudo-Frobenius elements of submonoids of $\mathbb{N}^d$}

\author{J.I. Garc\'{\i}a-Garc\'{\i}a}
\address{Universidad de C\'{a}diz\newline
\indent Departamento de Matem\'aticas/INDESS (Instituto Universitario para el Desarrollo Social Sostenible)}
\email{ignacio.garcia@uca.es}

\author{I. Ojeda}
\address{Universidad de Extremadura\newline
\indent Departamento de Matem\'aticas/IMUEx}
\email{ojedamc@unex.es}

\author{{J.C.} Rosales}
\address{Universidad de Granada\newline
\indent Departamento de \'Algebra}
\email{jcrosales@ugr.es}

\author{A. Vigneron-Tenorio}
\address{Universidad de C\'{a}diz\newline
\indent Departamento de Matem\'aticas/INDESS (Instituto Universitario para el Desarrollo Social Sostenible)}
\email{alberto.vigneron@uca.es}

\thanks{The first and the third author were partially supported by Junta de Andaluc\'{\i}a research group FQM-366 and by the project MTM2017-84890-P}

\thanks{
The second author was partially supported by the research groups FQM-024 (Junta de Extremadura/FEDER funds) and by the project MTM2015-65764-C3-1-P (MINECO/FEDER, UE) and by the project MTM2017-84890-P}

\thanks{
The fourth author was partially supported by Junta de Andaluc\'{\i}a research group FQM-366, by the project MTM2015-65764-C3-1-P (MINECO/FEDER, UE) and by the project MTM2017-84890-P}

\date{\today}
\subjclass[2010]{20M14, 13D02}
\keywords{ Affine semigroups, numerical semigroups, Frobenius elements, pseudo-Frobenius elements, Ap\'ery sets, Gluings, Free resolution, irreducible semigroups}

\newcommand{\comb}[2]{\left(
                      \begin{array}{c}
                       #1 \\
                       #2  \\
                      \end{array} \right)}

\begin{abstract}
In this paper we study those submonoids of $\mathbb{N}^d$ which a non-trivial pseudo-Frobenius set. In the affine case, we prove that they are the affine semigroups whose associated algebra over a field has maximal projective dimension possible. We prove that these semigroups are a natural generalization of numerical semigroups and, consequently, most of their invariants can be generalized. In the last section we introduce a new family of submonoids of $\mathbb{N}^d$ and using its pseudo-Frobenius elements we prove that the elements in the family are direct limits of affine semigroups.
\end{abstract}

\maketitle
\pagestyle{myheadings}
\markboth{{J. I.} Garc\'{\i}a-Garc\'{\i}a, I. Ojeda, {J.C.} Rosales \and {A.} Vigneron-Tenorio}{On pseudo-Frobenius elements of submonoids of $\mathbb{N}^d$}

\section*{Introduction}

Throughout this paper $\mathbb{N}$ will denote the set of nonnegative integers. Unless otherwise stated, all considered semigroups will be submonoids of $\mathbb{N}^d$.
Finetelly generated submonoids of $\mathbb{N}^d$ will be called, affine semigroups, as ussual. If $d=1$, affine semigroups are called numerical semigroups. Numerical semigroup has been widely studied in the literature (see, for instance, \cite{ns-book} and the references therein). It is well-known that $S \subseteq \mathbb{N}$ is a numerical semigroup if and only if $S$ is a submonoid of $\mathbb{N}$ such that $\mathbb{N} \setminus S$ is a finite set. Clearly, this does not hold for affine semigroups in general. In \cite{Csemig1}, the affine semigroups whose complementary in the cone that they generate is finite, in a suitable sense, are called $\mathcal{C}-$semigroups and the authors prove that a Wilf's conjecture can be generalized to this new situation.

In the numerical case, the finiteness of $\mathbb{N} \setminus S$ implies that there exists at least a positive integers $a \in \mathbb{N} \setminus S$ such that $a+S \setminus \{0\} \subseteq S$ (provided that $S \neq \mathbb{N}$). These integers are called pseudo-Frobenius numbers and the biggest one is the so-called Frobenius number. In this paper, we consider the semigroups such that there exists at least one element $\mathbf{a} \in \mathbb{N}^d \setminus S$ with $\mathbf a+S \setminus \{0\} \subseteq S$. By analogy, we call these elements the pseudo-Frobenius elements of $S$. We emphasize that the semigroups in the family of $\mathcal{C}-$semigroups have pseudo-Frobenius elements.

One the main results in this paper is Theorem \ref{Th1} which states that an affine semigroup, $S$, has pseudo-Frobenius elements if and only if the length of the minimal free resolution of the semigroup algebra $\Bbbk[S],$ with $\Bbbk$ being a field, as a module over a polynomial ring is maximal, that is, if $\Bbbk[S]$ has the maximal projective dimension possible (see Section \ref{S2}). For this reason we will say that affine semigroups with pseudo-Frobenius elements are maximal projective dimension semigroups, MPD-semigroups for short. Observe that, as minimal free resolution of semigroup algebras can be effectively computed, our results provides a computable necessary and sufficient condition for an affine semigroup to be a MPD-semigroup. In fact, using the procedure outlined \cite{OjVi}, one could theoretically compute the minimal free resolution of $\Bbbk[S]$ from the pseudo-Frobenius elements. To this end, the effective computation of the pseudo-Frobenius is needed. We provide a bound of the pseudo-Frobenius elements in terms of $d$ and the cardinal and size of the minimal generating set of $S$ (see Corollary \ref{Cor bound}).

In order to emulate the maximal property of the Frobenius element of a numerical semigroup, we fix a term order on $\mathbb{N}^d$ and define Frobenius elements as the maximal elements for the fixed order of the elements in the integral points in the complementary of an affine semigroup $S$ with respect to its cone. These Frobenius elements (if exist) are necesarily pseudo-Frobenius elements of $S$ for a maximality matter (see Lemma \ref{lemma1}). So, if Frobenius elements of $S$ exist then $\Bbbk[S]$ has maximal projective dimension, unfortunately the converse it is not true in general. However, there are relevant families of affine semigroups with Frobenius elements as the family of $\mathcal{C}-$semigroups. In the section devoted to Frobenius elements (Section \ref{S3}), we prove generalizations of well-known results for numerical semigroups such as Selmer's Theorem that relates the Frobenius numbers and Ap\'ery sets (Theorem \ref{Th Selmer}) or the characterization of the pseudo Frobenius elements in terms of the Ap\'ery sets (Proposition \ref{Prop ApPF}). We close this section proving that affine semigroups having Frobenius elements are stable by gluing, by giving a formula for a Frobenius element in the gluing (Theorem \ref{frob-gluing}).

In the Section \ref{Sect Irr}, we deal with the problem of the irreducibility of the MPD-semigroups. Again, we prove the analogous results for MPD-semigroups than the known-ones for irreducible numerical semigroups. Of special interest is the characterization of the pseudo-Frobenius sets for $\mathcal{C}-$semigroups (Theorem \ref{Th Irr} and Proposition \ref{Prop IrredC}). Pedro A. Garc\'{\i}a-S\'anchez communicated us that similar results were obtained by C. Cisto, G. Fiolla, C. Peterson and R. Utano in \cite{CFPU} for $\mathbb{N}^d-$semigroups, that is, those $\mathcal{C}-$semigroups whose associated cone is the whole $\mathbb{N}^d$.

Finally, in the last section of this paper, we introduce a new family of (non-necessarily finitely generated) submonoids of $\mathbb{N}^d$ which have pseudo-Frobenius elements, the elements of the family are called PI-monoids. These monoids are a natural generalization of the MED-semigroups (see \cite[Chapter 3]{ns-book}). We conclude the paper by proving that any PI-monoid is direct limit of MPD-semigroups.

The study of $\mathbb{N}^d-$semigroups, which is a subfamily of the MPD-semigroups, is becoming to be an active research area in affine semigroups theory. For instance, in \cite{CFU} algorithms for dealing with $\mathbb{N}^d-$semigroups are given. These algorithms are implemented in the development version site of the GAP (\cite{GAP}) \texttt{NumericalSgps}  package (\cite{numericalsgps}):

\centerline{\url{https://github.com/gap-packages/numericalsgps}.}

\section{Pseudo-Frobenius elements of affine semigroups}\label{S1}

Set $\mathcal{A} = \{\mathbf{a}_1, \ldots, \mathbf{a}_n\} \subset \mathbb{N}^d$ and let $S$ be the submonoid of $\mathbb{N}^d$ generated by $\mathcal{A}$. Consider the cone of $S$ in $\mathbb{Q}^d_{\geq 0}$ $$\mathrm{pos}(S) := \left\{\sum_{i=1}^n \lambda_i \mathbf{a}_i\ \mid\ \lambda_i \in \mathbb{Q}_{\geq 0},\ i = 1, \ldots,n \right\}$$
and define $\mathcal{H}(S) := (\mathrm{pos}(S) \setminus S) \cap \mathbb{N}^d.$

\begin{definition}
An integer vector $\mathbf{a} \in \mathcal{H}(S)$ is called a \textbf{pseudo-Frobenius element} of $S$ if $\mathbf a + S\setminus \{0\} \subseteq S.$ The set of pseudo-Frobenius elements of $S$ is denoted by $\textrm{PF}(S)$.
\end{definition}

Observe that the set $\mathrm{PF}(S)$ may be empty: indeed, let $$\mathcal{A}=\{(2,0),(1,1),(0,2)\} \subset \mathbb{N}^2.$$ The semigroup $S$ generated by $\mathcal{A}$ is the subset of points in $\mathbb{N}^2$ whose sum of coordinates is even. Thus, we has that $\mathcal{H}(S) + S = \mathcal H (S)$. Therefore $\mathrm{PF}(S) = \varnothing$.

On other hand, $\mathrm{PF}(S) \neq \mathcal{H}(S)$ in general as the following example shows.

\begin{example}\label{Ex Irr?}
Let $S$ be the submonoid of $\mathbb{N}^2$ generated by the columns of the following matrix
$$
A =
\left(
\begin{array}{ccccc}
3 & 5 & 0 & 1 & 2 \\ 0 & 0 & 1 & 3 & 3
\end{array}
\right)
$$
In this case, \begin{align*}
\mathcal{H}(S) = \{ &  (1,0),(2,0),(4,0),(1,1),(2,1),(4,1), \\ & (1,2),(2,2),(4,2),(7,0),(7,1),(7,2)\},
\end{align*}
whereas $\mathrm{PF}(S) = \{(7,2)\}:$
\begin{center}
\begin{tikzpicture}[line cap=round,line join=round,>=triangle 45,x=1.0cm,y=1.0cm,scale=.75]
\draw [color=gray,dash pattern=on 2pt off 2pt, xstep=1.0cm,ystep=1.0cm] (0,0) grid (9.5,4.5);
\draw[->,color=black] (0,0) -- (9.5,0);
\foreach \x in {1,2,3,4,5,6,7,8}
\draw[shift={(\x,0)},color=black] (0pt,2pt) -- (0pt,-2pt) node[below] {\footnotesize $\x$};
\draw[->,color=black] (0,0) -- (0,4.5);
\foreach \y in {1,2,3,4}
\draw[shift={(0,\y)},color=black] (2pt,0pt) -- (-2pt,0pt) node[left] {\footnotesize $\y$};
\draw[color=black] (0pt,-10pt) node[right] {\footnotesize $0$};
\clip(-0.5,-0.5) rectangle (9.5,4.5);
\fill[fill=blue,fill opacity=0.1] (0,3) -- (10,3) -- (10,5) -- (0,5) -- cycle;
\fill[fill=blue,fill opacity=0.1] (8,0) -- (10,0) -- (10,3) -- (8,3) -- cycle;
\begin{scriptsize}
\fill [color=blue] (3,0) circle (3pt);
\draw[color=blue] (3.3,0.26) node {$a_1$};
\fill [color=blue] (5,0) circle (3pt);
\fill [color=blue] (5,1) circle (2pt);
\fill [color=blue] (5,2) circle (2pt);
\fill [color=blue] (6,0) circle (2pt);
\fill [color=blue] (6,1) circle (2pt);
\fill [color=blue] (6,2) circle (2pt);
\draw[color=blue] (5.3,0.26) node {$a_2$};
\fill [color=blue] (0,1) circle (3pt);
\draw[color=blue] (0.3,1.26) node {$a_3$};
\fill [color=blue] (1,3) circle (3pt);
\draw[color=blue] (1.3,3.26) node {$a_4$};
\fill [color=blue] (2,3) circle (3pt);
\draw[color=blue] (2.3,3.26) node {$a_5$};
\fill [color=blue] (0,0) circle (2pt);
\fill [color=blue] (0,2) circle (2pt);
\fill [color=blue] (3,1) circle (2pt);
\fill [color=blue] (3,2) circle (2pt);
\draw[->,color=red] (0,0) -- (7,2);
\draw[color=red] (4,1.33) node {$\mathbf v$};
\end{scriptsize}
\end{tikzpicture}
\end{center}
The elements in $S$ are the blue points and the points in the shadowed blue area. The red vector is the only pseudo-Frobenius element of $S$. The big blue points correspond to the minimal generators of $S$.
\end{example}

There are relevant families of numerical semigroups for which the pseudo-Frobenius element exist.

\begin{definition}
If $\mathcal{H}(S)$ is finite, then $S$ is said to be a \textbf{$\mathcal{C}-$semigroup}, where $\mathcal{C}$ denotes the cone $\mathrm{pos}(S)$.
\end{definition}

The $\mathcal{C}-$semigroups were introduced in \cite{Csemig1}.

\begin{proposition}
If $S$ is a $\mathcal{C}-$semigroup different from $\mathcal{C}\cap \mathbb{N}^d$, then $\textrm{PF}(S) \neq \varnothing$.
\end{proposition}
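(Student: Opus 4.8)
The plan is to mimic the classical numerical-semigroup argument, where the Frobenius number, being the largest gap, is automatically a pseudo-Frobenius number. First I would note that $S \subseteq \mathrm{pos}(S) \cap \mathbb{N}^d = \mathcal{C} \cap \mathbb{N}^d$, so the hypothesis $S \neq \mathcal{C}\cap\mathbb{N}^d$ forces the inclusion to be strict; hence $\mathcal{H}(S) = (\mathrm{pos}(S)\setminus S)\cap \mathbb{N}^d$ is nonempty, and by the definition of $\mathcal{C}$-semigroup it is also finite.

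Next, fix any total order $\preceq$ on $\mathbb{N}^d$ that refines the componentwise partial order and is compatible with translations (the lexicographic order will do); equivalently one can just work with the componentwise partial order $\leq$ directly. Let $\mathbf{f}$ be a maximal element of the finite nonempty set $\mathcal{H}(S)$ for this order. I claim $\mathbf{f}\in\mathrm{PF}(S)$, which finishes the proof.

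To prove the claim, take $\mathbf{s}\in S\setminus\{0\}$. Since $\mathbf{f}\in\mathcal{H}(S)\subseteq\mathrm{pos}(S)$ and $\mathbf{s}\in S\subseteq\mathrm{pos}(S)$, and $\mathrm{pos}(S)$ is closed under addition, we get $\mathbf{f}+\mathbf{s}\in\mathrm{pos}(S)\cap\mathbb{N}^d$. Therefore either $\mathbf{f}+\mathbf{s}\in S$, as desired, or $\mathbf{f}+\mathbf{s}\in\mathcal{H}(S)$. In the second case, because $\mathbf{s}$ is a nonzero vector of $\mathbb{N}^d$, the vector $\mathbf{f}+\mathbf{s}$ is strictly larger than $\mathbf{f}$ in the chosen order, contradicting the maximality of $\mathbf{f}$ in $\mathcal{H}(S)$. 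Hence $\mathbf{f}+\mathbf{s}\in S$ for all $\mathbf{s}\in S\setminus\{0\}$, i.e. $\mathbf{f}+S\setminus\{0\}\subseteq S$, so $\mathbf{f}\in\mathrm{PF}(S)$ and $\mathrm{PF}(S)\neq\varnothing$.

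There is no serious obstacle here; the proof is essentially immediate. The only points deserving care are: that $\mathcal{H}(S)$ is genuinely nonempty, which is exactly the content of the hypothesis $S\neq\mathcal{C}\cap\mathbb{N}^d$; that the cone $\mathrm{pos}(S)$ is closed under addition, so that $\mathbf{f}+\mathbf{s}$ cannot leave it and thus must lie in $S\cup\mathcal{H}(S)$; and that adding a nonzero element of $\mathbb{N}^d$ strictly increases an element for the chosen order. (The maximal element $\mathbf{f}$ produced here is, in the terminology of Section~\ref{S3}, a Frobenius element of $S$ with respect to a term order, which is consistent with Lemma~\ref{lemma1}.)
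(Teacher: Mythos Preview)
Your proof is correct and follows essentially the same approach as the paper: pick a term order on $\mathbb{N}^d$, take the maximum of the finite nonempty set $\mathcal{H}(S)$, and observe that adding any nonzero $\mathbf{s}\in S$ would produce a strictly larger element of $\mathcal{H}(S)$ unless it lands in $S$. Your write-up is simply more explicit about why $\mathcal{H}(S)$ is nonempty and why $\mathbf{f}+\mathbf{s}$ stays in $\mathrm{pos}(S)\cap\mathbb{N}^d$, but the argument is the same.
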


\begin{proof}
Let $\preceq$ be a term order on $\mathbb{N}^d$ and set $\mathbf{a}:=max_{\preceq}(\mathcal{H}(S))$. If $\mathbf{b} \in S \setminus \{0\}$ is such that $\mathbf{a} + \mathbf{b} \not\in S$, then $\mathbf{a} + \mathbf{b} \in \mathcal{H}(S)$ and $\mathbf{a} + \mathbf{b} \succ \mathbf{a}$ which contradicts the maximality of $\mathbf{a}$.
\end{proof}

The converse of the above proposition is not true, as the following example shows.

\begin{example}\label{Ex no C-semig}
Let $\mathcal{A} \subset \mathbb{N}^2$ be the columns of the matrix
\[
A = \left(\begin{array}{cccccccccccc}
18 & 18 & 4 & 20 & 23 & 8 & 11 & 11 & 10 & 14 & 7 & 7 \\
 9 &  3 & 1 &  8 & 10 & 3 &  5 &  2 &  3 &  3 & 2 & 3
\end{array}\right)
\]
and let $S$ be the subsemigroup of $\mathbb{N}^2$ generated by $\mathcal{A}$.
The elements in $S$ are the integer points in an infinite family of homotetic pentagons.
\begin{center}
\includegraphics[scale=.6]{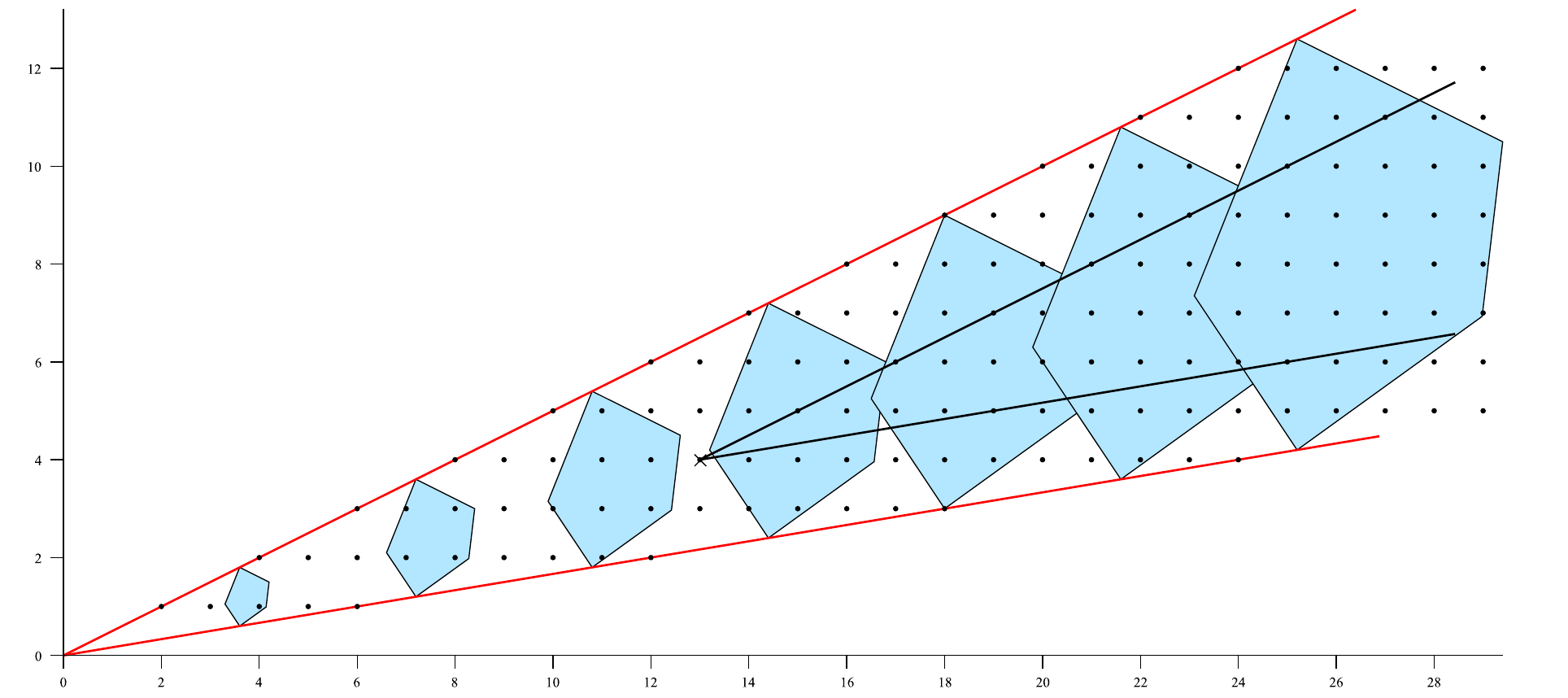}
\end{center}
This semigroup is a so-called multiple convex body semigroup (see \cite{multiple} for further details). Clearly, $S$ is not a $\mathcal{C}-$semigroup, but $(13,4)\in \textrm{PF}(S)$.

\end{example}

\section{Maximal projective dimension}\label{S2}

As in the previous section, set $\mathcal{A} = \{\mathbf{a}_1, \ldots, \mathbf{a}_n\} \subset \mathbb{N}^d$ and let $S$ be the submonoid of $\mathbb{N}^d$ generated by $\mathcal{A}$. Let $\Bbbk$ be an arbitrary field.

The surjective $\Bbbk$-algebra morphism $$\varphi_0 : R:=\Bbbk[x_1, \ldots, x_n]
\longrightarrow \Bbbk[S] := \bigoplus_{\mathbf a \in S} \Bbbk\, \chi^\mathbf{a};\ x_i \longmapsto \chi^{\mathbf{a}_i}$$ is
$S$-graded, thus, the ideal $I_S := \ker(\varphi_0)$ is a
$S$-homogeneous ideal called the ideal of $S.$ Notice that $I_S$ is
a toric ideal generated by $$\Big\{\mathbf{x}^\mathbf{u} - \mathbf{x}^\mathbf{v} :
\sum_{i=1}^n u_i \mathbf{a}_i = \sum_{i=1}^n v_i \mathbf{a}_i \Big \}.$$

Now, by using the $S$-graded Nakayama's lemma recursively, we may construct $S-$graded
$\Bbbk-$algebra homomorphism $$\varphi_{j+1} : R^{s_{j+1}}
\longrightarrow R^{s_j},$$ corresponding to a choice of a minimal
set of $S-$homogeneous generators for each module of syzygies $N_j
:= \ker(\varphi_j),j \geq 0$ (see \cite{BCMP} and the references therein). Notice that $N_0 = I_S.$ Thus, we obtain a
minimal free $S-$graded resolution for the $R-$module
$\Bbbk[S]$ of the form
$$
\ldots \longrightarrow R^{s_{j+1}}
\stackrel{\varphi_{j+1}}{\longrightarrow} R^{s_j} \longrightarrow
\ldots \longrightarrow R^{s_2} \stackrel{\varphi_2}{\longrightarrow}
R^{s_1} \stackrel{\varphi_1}{\longrightarrow} R
\stackrel{\varphi_{0}}{\longrightarrow} \Bbbk[S]
\longrightarrow 0,
$$
where $s_{j+1} := \sum_{\mathbf b \in S} \mathrm{dim}_\Bbbk V_j(\mathbf b),$
with $V_j(\mathbf b) := (N_j)_{\mathbf b}/(\mathfrak{m} N_j)_{\mathbf b},$ is the so-called
$(j+1)$th Betti number of $\Bbbk[S]$, where $\mathfrak{m}= \langle x_1, \ldots, x_n \rangle$ is the irrelevant maximal ideal.

Observe that the dimension of $V_j(\mathbf b)$ is
the number of generators of degree ${\mathbf b}$ in a minimal system of
generators of the $j$th module of syzygies $N_j$ (i.e. the
multigraded Betti number $s_{j,\mathbf  b}$). The $\mathbf{b} \in S$ such that $V_j(\mathbf b) \neq 0$ are called \textbf{$S-$degrees of the $j-$minimal syzygy} of $\Bbbk[S]$. So, by the Noetherian property
of $R,\ s_{j+1}$ is finite. Moreover, by the Hilbert's syzygy theorem and the Auslander-Buchsbaum's
formula, it follows that $s_j = 0$ for $j > p = n - \mathrm{depth}_R
\Bbbk[S]$ and $s_p \neq 0$. Such integer $p$ is called the \textbf{projective dimension of} $S$.

Since $\mathrm{depth}_R \Bbbk[S] \geq 1$, the projective dimension of $S$ is lesser than or equal to $n-1$.
We will say that $S$ is a \textbf{maximal projective dimension semigroup} (MPD-semigroup, for short) if its projective dimension is $n-1$, equivalently, if $\mathrm{depth}_R \Bbbk[S] = 1$.

Recall that $S$ is said to be Cohen-Macaulay if $\mathrm{depth}_R \Bbbk[S] = \dim(\Bbbk[S])$. So, if $S$ is a MPD-semigroup, then $S$ is Cohen-Macaulay if and only if $\Bbbk[S]$ is the coordinate ring of a monomial curve; equivalently, $S$ is a numerical semigroup.

\begin{theorem}\label{Th1}
The necessary and sufficient condition for $S$ to be a MPD-semigroup is that $\mathrm{PF}(S) \neq \varnothing$.
In this case, $\mathrm{PF}(S)$ has finite cardinality.
\end{theorem}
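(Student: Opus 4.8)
The plan is to read off the condition ``$\mathrm{depth}_R\Bbbk[S]=1$'' from the socle of a suitable Ap\'ery-type quotient of $\Bbbk[S]$ and then to recognise that socle as a translate of $\mathrm{PF}(S)$. Since $\Bbbk[S]$ is a domain and $\varphi_0(x_1)=\chi^{\mathbf a_1}\neq 0$, the variable $x_1$ does not lie in the prime ideal $I_S$, so it is a nonzerodivisor on $\Bbbk[S]$ (this is exactly why $\mathrm{depth}_R\Bbbk[S]\geq 1$). Hence $\mathrm{depth}_R\Bbbk[S]=1$ if and only if $\mathrm{depth}_R\big(\Bbbk[S]/x_1\Bbbk[S]\big)=0$, i.e. if and only if the socle $\big(0:_{\Bbbk[S]/x_1\Bbbk[S]}\mathfrak m\big)$ is nonzero. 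Now $\Bbbk[S]/x_1\Bbbk[S]\cong R/(I_S+(x_1))$ is a cyclic, hence finitely generated, $\mathbb Z^d$-graded $R$-module, so its socle is a finite-dimensional $\mathbb Z^d$-graded $\Bbbk$-vector space, nonzero precisely when it contains a nonzero homogeneous element. (The finiteness here is what will eventually yield $|\mathrm{PF}(S)|<\infty$.)

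The second step is to compute this homogeneous socle explicitly. For $\mathbf a\in\mathbb Z^d$ one checks that $(\Bbbk[S]/x_1\Bbbk[S])_{\mathbf a}\neq 0$ exactly when $\mathbf a\in S$ and $\mathbf a-\mathbf a_1\notin S$, in which case it is one-dimensional, spanned by the class of $\chi^{\mathbf a}$; and this class is annihilated by $\mathfrak m$ iff $\chi^{\mathbf a+\mathbf a_i}\in x_1\Bbbk[S]$ for every $i$, that is, iff $\mathbf a+\mathbf a_i-\mathbf a_1\in S$ for $i=1,\dots,n$. Setting $\mathbf f:=\mathbf a-\mathbf a_1\in\mathbb Z^d$, this shows: $S$ is an MPD-semigroup if and only if there exists $\mathbf f\in\mathbb Z^d$ with $\mathbf f\notin S$ and $\mathbf f+\mathbf a_i\in S$ for all $i$; and $\mathbf f\mapsto\mathbf f+\mathbf a_1$ is a bijection between the set of such $\mathbf f$ and the (finite) set of socle degrees of $\Bbbk[S]/x_1\Bbbk[S]$.

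It then remains to identify $\{\mathbf f\in\mathbb Z^d:\mathbf f\notin S,\ \mathbf f+\mathbf a_i\in S\ \forall i\}$ with $\mathrm{PF}(S)$. If $\mathbf f\in\mathrm{PF}(S)$ then $\mathbf f\notin S$ and, since each $\mathbf a_i\in S\setminus\{0\}$, also $\mathbf f+\mathbf a_i\in S$. Conversely, given such an $\mathbf f$, writing any nonzero element of $S$ as $\mathbf a_i+\mathbf b'$ with $\mathbf b'\in S$ gives $\mathbf f+(S\setminus\{0\})\subseteq S$, so the only thing to verify is $\mathbf f\in\mathcal H(S)$; and since $\mathrm{pos}(S)\subseteq\mathbb Q^d_{\geq 0}$ and $\mathbf f\in\mathbb Z^d$, it suffices to prove $\mathbf f\in\mathrm{pos}(S)$. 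When $\dim\mathrm{pos}(S)\geq 2$ this is short: every facet of $\mathrm{pos}(S)=\mathrm{pos}(\mathbf a_1,\dots,\mathbf a_n)$ is generated by the $\mathbf a_i$ it contains and is positive-dimensional, hence contains some $\mathbf a_j$; for the corresponding supporting functional $\mathbf w$ (nonnegative on $\mathrm{pos}(S)$, vanishing on that facet) we get $0\leq\langle\mathbf w,\mathbf f+\mathbf a_j\rangle=\langle\mathbf w,\mathbf f\rangle$, and intersecting over all facets gives $\mathbf f\in\mathrm{pos}(S)$, whence $\mathbf f\in\mathbb Z^d\cap\mathbb Q^d_{\geq 0}=\mathbb N^d$. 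The remaining case $\dim\mathrm{pos}(S)\leq 1$ is the numerical-semigroup situation, where the statement is the classical one ($\mathbf f\notin S$ together with $\mathbf f+\mathbf a_i\in S$ forces $\mathbf f\geq 0$, the degenerate free monoid being the usual exception).

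Putting these together yields the equivalence, and the bijection of the second step then identifies $\mathrm{PF}(S)$ with the finite set of socle degrees of $\Bbbk[S]/x_1\Bbbk[S]$, so $\mathrm{PF}(S)$ is finite. I expect the genuinely delicate point to be precisely the implication ``$\mathbf f\notin S$ and $\mathbf f+\mathbf a_i\in S\ \forall i\ \Rightarrow\ \mathbf f\in\mathrm{pos}(S)$'' (and hence $\mathbf f\in\mathbb N^d$); the rest is a routine use of a regular element together with the $\mathbb Z^d$-graded structure. Finally, I would note that running the same bookkeeping through the Koszul complex on $x_1,\dots,x_n$ refines the result: $\mathbf f\in\mathrm{PF}(S)$ iff the simplicial complex $\{I\subseteq\{1,\dots,n\}:\mathbf b-\sum_{i\in I}\mathbf a_i\in S\}$ with $\mathbf b=\mathbf f+\sum_i\mathbf a_i$ is the boundary of the full $(n-1)$-simplex, in which case $s_{n-1,\mathbf b}=1$; thus $\mathrm{PF}(S)$, shifted by $\sum_i\mathbf a_i$, is exactly the set of $S$-degrees of the top minimal syzygies of $\Bbbk[S]$.
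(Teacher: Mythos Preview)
Your argument is correct and reaches the same conclusion by a genuinely different route. The paper does not pass through a regular element and the socle of $\Bbbk[S]/x_1\Bbbk[S]$; instead it invokes the combinatorial description of the top syzygy space from \cite[Theorem~2.1]{BCMP}: $V_{n-2}(\mathbf b)\neq 0$ if and only if $\mathbf b-\sum_{i=1}^n\mathbf a_i\notin S$ while $\mathbf b-\sum_{i\in F}\mathbf a_i\in S$ for every proper $F\subsetneq\{1,\dots,n\}$. Writing $\mathbf f=\mathbf b-\sum_i\mathbf a_i$, this is exactly the condition ``$\mathbf f\notin S$ and $\mathbf f+\mathbf a_j\in S$ for all $j$'' that you isolate, so the two proofs converge at the same intermediate characterization and then run the identical Farkas/facet argument to place $\mathbf f$ in $\mathrm{pos}(S)$ (the paper does this uniformly, without separating off the one-dimensional case). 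What each approach buys: yours is self-contained---it needs no external citation, the finiteness of $\mathrm{PF}(S)$ drops out of the Noetherianity of the socle, and the quotient $\Bbbk[S]/x_1\Bbbk[S]$ makes the connection with the Ap\'ery set $\mathrm{Ap}(S,\mathbf a_1)$ explicit. The paper's route, on the other hand, lands directly on Corollary~\ref{cor1} (the bijection $\mathbf f\mapsto\mathbf f+\sum_i\mathbf a_i$ between $\mathrm{PF}(S)$ and the $S$-degrees of the $(n-2)$th syzygies), which you recover only in your closing Koszul remark; and it reads finiteness off the Betti numbers rather than the socle.
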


\begin{proof}
By definition, $S$ is a MPD-semigroup if and only if $V_{n-2}(\mathbf{b}) \neq \varnothing$ for some $\mathbf{b} \in S$. By \cite[Theorem 2.1]{BCMP}, given $\mathbf{b} \in S$, $V_{n-2}(\mathbf{b}) \neq \varnothing$ if and only if $\mathbf{b} - \sum_{i=1}^n \mathbf{a}_i \not\in S$ and $\mathbf{b} - \sum_{i \in F} \mathbf{a}_i \in S,$ for every $F \subsetneq \{1, \ldots, n\}$. Clearly, if $\mathrm{PF}(S) \neq \varnothing$, there exists $\mathbf{a} \in \mathcal{H}(S)$ such that $\mathbf{a} + S \setminus \{0\} \subseteq S$. So, by taking $\mathbf{b} = \mathbf{a} + \sum_{i=1}^n \mathbf a_i$, one has that $\mathbf{b} - \sum_{i=1}^n \mathbf{a}_i \not\in S$ and $\mathbf{b} - \sum_{i \in F} \mathbf{a}_i \in S,$ for every $F \subsetneq \{1, \ldots, n\}$, and we conclude $S$ is a MPD-semigroup. Conversely, if there exists $\mathbf{b} \in S$ such that $V_{n-2}(\mathbf{b}) \neq \varnothing$, then we have that $\mathbf{a} := \mathbf{b} - \sum_{i=1}^n \mathbf a_i \in \mathbb{Z}^d \setminus S$. Clearly,
$\mathbf{a} \in \mathbb{Z}^d \setminus S$ and $\mathbf{a} + S \setminus \{0\} \subseteq S$, because $\mathbf{a} + \mathbf{a}_j \in S,$ for every $j \in \{1, \ldots, n\}.$ Thus,  in order to see that $\mathbf{a} \in \mathcal{H}(S)$ it suffices to prove that $\mathbf{a} \in \mathrm{pos}(S)$. Without loss of generality, we assume that $\{a_1, \ldots, a_\ell\}$ is a minimal set of generators of $\mathrm{pos}(S)$.
By Farkas' Lemma, $\mathrm{pos}(S)$ is a rational convex polyhedral cone. Then, for each $j \in \{1, \ldots, \ell\}$ there exists $\mathbf{c}_j \in \mathbb{R}^d$ such that $\mathbf{a}_j \cdot \mathbf{c}_j = 0$ and
$\mathbf{a}_i \cdot \mathbf{c}_j \geq 0,\ i \neq j,$ where $\cdot$ denotes the usual inner product on $\mathbb{R}^n$. Now, since $\mathbf{a} + \mathbf{a}_j \in S,$ one has that $\mathbf{a} + \mathbf{a}_j = \sum_{i=1}^n u_{ij} \mathbf{a}_i$, for some $u_{ij} \in \mathbb{N}$. Therefore \[\mathbf{a} \cdot \mathbf{c}_j = (\mathbf{a} + \mathbf{a}_j) \cdot \mathbf c_j = (\sum_{i=1}^n u_{ij} \mathbf{a}_i) \cdot \mathbf{c}_j = \sum_{i=1}^n u_{ij} (\mathbf{a}_i \cdot \mathbf{c}_j) \geq 0,\] for every $j \in \{1, \ldots, \ell\}$. That is to say $\mathbf{a} \in \mathrm{pos}(S)$.

Finally, if $\mathrm{PF}(S) \neq \varnothing$, the finiteness of $\mathrm{PF}(S)$ follows from the finiteness of the Betti numbers.
\end{proof}

Observe that from the arguments in the proof of Theorem \ref{Th1} it follows that $s_p$ is the cardinality of $\mathrm{PF}(S)$. In fact, we have proved the following fact:

\begin{corollary}\label{cor1}
If $S$ is a MPD-semigroup, then $\mathbf{b} \in S$ is the $S-$degree of the $(n-2)$th minimal syzygy of $\Bbbk[S]$ if and only if $\mathbf{b} \in \{\mathbf{a} + \sum_{i=1}^n \mathbf{a}_i,\ \mathbf{a} \in \mathrm{PF}(S)\}$.
\end{corollary}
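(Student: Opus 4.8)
The plan is to read the statement directly off the combinatorial criterion for the $(n-2)$th Betti numbers that was already used in the proof of Theorem \ref{Th1}. Recall that, by \cite[Theorem 2.1]{BCMP}, for $\mathbf{b} \in S$ one has $V_{n-2}(\mathbf{b}) \neq 0$ --- that is, $\mathbf{b}$ is an $S$-degree of the $(n-2)$th minimal syzygy of $\Bbbk[S]$ --- if and only if $\mathbf{b} - \sum_{i=1}^n \mathbf{a}_i \notin S$ while $\mathbf{b} - \sum_{i \in F} \mathbf{a}_i \in S$ for every proper subset $F \subsetneq \{1, \ldots, n\}$. Since $S$ is assumed to be a MPD-semigroup, $p = n-1$, so the $(n-2)$th syzygy module is exactly the last nonzero one and the criterion applies to it.

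For the ``only if'' implication, given such a $\mathbf{b}$ I would set $\mathbf{a} := \mathbf{b} - \sum_{i=1}^n \mathbf{a}_i$. Taking $F = \{1, \ldots, n\} \setminus \{j\}$ in the criterion gives $\mathbf{a} + \mathbf{a}_j \in S$ for every $j$, whence $\mathbf{a} + (S \setminus \{0\}) \subseteq S$ because $S \setminus \{0\}$ is generated, as a semigroup ideal, by $\mathbf{a}_1, \ldots, \mathbf{a}_n$; moreover $\mathbf{a} \notin S$. The only point that is not immediate is that $\mathbf{a} \in \mathrm{pos}(S)$, but this is precisely the Farkas'-Lemma positivity argument already carried out in the proof of Theorem \ref{Th1}. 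Hence $\mathbf{a} \in \mathcal{H}(S)$, so $\mathbf{a} \in \mathrm{PF}(S)$ and $\mathbf{b} = \mathbf{a} + \sum_{i=1}^n \mathbf{a}_i$ lies in the asserted set.

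For the ``if'' implication, suppose $\mathbf{b} = \mathbf{a} + \sum_{i=1}^n \mathbf{a}_i$ with $\mathbf{a} \in \mathrm{PF}(S)$. Then $\mathbf{b} \in S$ and $\mathbf{b} - \sum_{i=1}^n \mathbf{a}_i = \mathbf{a} \notin S$. For a proper subset $F \subsetneq \{1, \ldots, n\}$ we have $\mathbf{b} - \sum_{i \in F}\mathbf{a}_i = \mathbf{a} + \sum_{i \notin F}\mathbf{a}_i$ with $\sum_{i \notin F}\mathbf{a}_i \in S \setminus \{0\}$, so this element lies in $S$ by the defining property of a pseudo-Frobenius element. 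The criterion of \cite[Theorem 2.1]{BCMP} then gives $V_{n-2}(\mathbf{b}) \neq 0$, i.e.\ $\mathbf{b}$ is an $S$-degree of the $(n-2)$th minimal syzygy.

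I do not expect a genuine obstacle here: the content is a repackaging of the equivalence established inside the proof of Theorem \ref{Th1}, the only mild subtlety being the appeal to the positivity argument needed to conclude $\mathbf{a} \in \mathrm{pos}(S)$ (hence $\mathbf{a}\in\mathcal{H}(S)$, not merely $\mathbf{a} \in \mathbb{Z}^d \setminus S$). As a by-product, the map $\mathbf{a} \mapsto \mathbf{a} + \sum_{i=1}^n \mathbf{a}_i$ is a bijection between $\mathrm{PF}(S)$ and the set of $S$-degrees of the $(n-2)$th minimal syzygy, which recovers the observation preceding the corollary that $s_p = \#\,\mathrm{PF}(S)$.
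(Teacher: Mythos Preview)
Your proposal is correct and follows exactly the approach the paper intends: the corollary is stated immediately after Theorem \ref{Th1} with the remark that it has ``in fact'' already been proved there, and you have simply spelled out the two directions of that argument (the BCMP criterion together with the Farkas-Lemma positivity step) in full.
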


In \cite{OjVi}, it is outlined a procedure for a partial computation of the minimal free resolution of $S$ starting from a set of $S-$degrees of the $j$th minimal syzygy of $\Bbbk[S],$ for some $j.$ This procedure gives a whole free resolution if one knows
all the $S-$degrees of the $p$th minimal syzygies of $\Bbbk[S]$, where $p$ is the projective dimension of $S$. Therefore, by Corollary \ref{cor1}, if $S$ is a MPD-semigroup, we can use the proposed method in \cite{OjVi} to compute the minimal free resolution of $\Bbbk[S]$, provided that we were able to compute $\mathrm{PF}(S)$. However, this is not easy at all, for this reason it is highly interesting to given bounds for the elements in $\mathrm{PF}(S)$.

Given $\mathbf u = (u_1, \ldots, u_n) \in \mathbb{N}^n$, let $\ell(u)$ be the length of $\mathbf{u}$ that is, $\ell(\mathbf{u}) = \sum_{i=1}^n u_i$, and for an $d \times n-$integer matrix $B=(\mathbf b_1|\cdots | \mathbf  b_n)$, we will write $\vert \vert B \vert \vert _{\infty}$ for $\mbox{max}_i \sum_{j=1}^n |b_{ij}|$. In \cite{BPV}, the authors provide an explicit bound for the $S-$degrees of the minimal generators of $N_j$, for every $j \in \{1, \ldots, p\}$.

Let $A \in \mathbb{N}^{d \times n}$ be the matrix whose $i$th column is $\mathbf{a}_i,\ i = 1, \ldots, n$.

\begin{theorem}{\cite[Theorem 3.2]{BPV}} \label{t34}
If $\mathbf{b} \in S$ is an $S-$degree of a minimal $j-$syzygy of $\Bbbk[S]$, then $\mathbf{b} = A \mathbf{u}$ with $\mathbf{u} \in \mathbb{N}^n$ such that $$\ell(\mathbf{u}) \leq (1 + 4\, \vert \vert A \vert \vert _{\infty})^{d(d_j-1)} + (j+1)d_j -1,$$ where $d_j = \comb {n}{j+1}$.
\end{theorem}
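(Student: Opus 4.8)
Since Theorem~\ref{t34} is quoted from \cite{BPV}, I only outline how one would prove it. The backbone is the simplicial description of the minimal multigraded free resolution of $\Bbbk[S]$ (see \cite{BCMP} and the references therein): for each $\mathbf b\in S$ one attaches the simplicial complex $\Delta_{\mathbf b}$ on the vertex set $\{1,\dots,n\}$ whose faces are the subsets $F$ with $\mathbf b-\sum_{i\in F}\mathbf a_i\in S$, and the multigraded Betti number of $\Bbbk[S]$ in homological degree $j$ and $S$-degree $\mathbf b$ equals $\dim_\Bbbk\widetilde H_{j-1}(\Delta_{\mathbf b};\Bbbk)$. So the first step is to record that $\mathbf b$ is an $S$-degree of a minimal $j$-syzygy if and only if $\widetilde H_{j-1}(\Delta_{\mathbf b};\Bbbk)\neq 0$; this is the general form of the criterion \cite[Theorem~2.1]{BCMP} that was used in the proof of Theorem~\ref{Th1} (the situation where $\Delta_{\mathbf b}$ is the boundary of the full simplex).

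Next I would extract the finite combinatorial content of the condition $\widetilde H_{j-1}(\Delta_{\mathbf b};\Bbbk)\neq 0$. As this group is computed from chains of dimension at most $j$, it depends only on the $j$-skeleton of $\Delta_{\mathbf b}$, that is, only on which subsets $F\subseteq\{1,\dots,n\}$ of size at most $j+1$ satisfy $\mathbf b-\sum_{i\in F}\mathbf a_i\in S$. The $j$-skeleton of the full $(n-1)$-simplex is $(j-1)$-connected, so if \emph{every} such $F$ were a face then $\widetilde H_{j-1}(\Delta_{\mathbf b};\Bbbk)$ would vanish; hence at least one subset of size $\le j+1$ must fail to be a face. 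A sharper analysis of when a $(j-1)$-cycle of $\Delta_{\mathbf b}$ fails to bound shows that the non-vanishing is already witnessed by a sub-collection of at most $d_j-1$ such non-faces (recall $d_j=\comb{n}{j+1}$); prescribing the values $\mathbf b-\sum_{i\in F}\mathbf a_i$ for the $F$ in this witnessing collection involves at most $(j+1)(d_j-1)\le(j+1)d_j-1$ of the generators $\mathbf a_1,\dots,\mathbf a_n$, which is the origin of the additive term $(j+1)d_j-1$ in the bound.

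The third step is quantitative. The membership conditions defining the witnessing collection say that $\mathbf b\in S$ while certain translates $\mathbf b-\sum_{i\in F}\mathbf a_i$ do not lie in $S$, and these can be encoded as the feasibility of a system of linear Diophantine equations and inequalities whose coefficient matrix is assembled from $A$ and whose relevant part has at most $d(d_j-1)$ rows (one block of $d$ rows per non-face in the witnessing collection) with entries bounded by $\|A\|_\infty$. Applying the classical estimate on the length of a minimal non-negative integer solution of such a system — of the type used to bound Graver bases and circuits — one gets a short representative of length at most $(1+4\|A\|_\infty)^{d(d_j-1)}$; concatenating it with the at most $(j+1)d_j-1$ generators coming from the combinatorial core yields $\mathbf u\in\mathbb N^n$ with $A\mathbf u=\mathbf b$ and $\ell(\mathbf u)\le(1+4\|A\|_\infty)^{d(d_j-1)}+(j+1)d_j-1$.

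The delicate point — and the step I expect to be the real obstacle — is the middle one: carrying out the homological bookkeeping precisely enough that the number of non-faces that must be retained is exactly $d_j-1$ and the associated Diophantine system has exactly $d(d_j-1)$ governing rows, so that the constants $\comb{n}{j+1}$, the exponent $d(d_j-1)$ and the summand $(j+1)d_j-1$ come out as stated rather than merely a bound of the same shape. The homological characterization of the first step and the integer-programming estimate of the third step are, by contrast, routine once the right finite witnessing data has been singled out.
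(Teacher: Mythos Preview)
The paper does not give a proof of this theorem: it is simply quoted as \cite[Theorem~3.2]{BPV} and used as a black box to derive Corollary~\ref{Cor bound}. You correctly recognize this and offer only an outline, so there is nothing in the paper to compare against line by line.

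Your outline follows the right architecture: the simplicial complex $\Delta_{\mathbf b}$ and the identification of multigraded Betti numbers with $\widetilde H_{j-1}(\Delta_{\mathbf b};\Bbbk)$ is exactly the \cite{BCMP} machinery underlying \cite{BPV}, and the idea of turning the homological non-vanishing into a finite system of membership/non-membership conditions and then bounding a minimal non-negative solution is indeed the strategy of \cite{BPV}. You are also honest that the middle step---pinning down precisely $d_j-1$ non-faces and a system with exactly $d(d_j-1)$ governing rows---is where the work lies; as stated, that step is a heuristic rather than an argument, so the outline is a plausible road map but not a proof. Since the present paper never claims to supply one, this is appropriate.
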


By Corollary \ref{cor1}, this bound can be particularized for $j=n-2$ as follows.

\begin{corollary}\label{Cor bound}
Let $S$ be a MPD-semigroup. If $\mathbf{a} \in \mathrm{PF}(S)$, then $\mathbf{a} = A \mathbf{v}$ for some $\mathbf{v} \in \mathbb{N}^n$ satisfying $$\ell(\mathbf v) \leq (1 + 4\, \vert \vert A \vert \vert _{\infty})^{d(n-1)} + n^2-1.$$
\end{corollary}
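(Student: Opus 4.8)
The plan is to reduce Corollary \ref{Cor bound} directly to Theorem \ref{t34} via Corollary \ref{cor1}. Let $S$ be an MPD-semigroup and let $\mathbf{a} \in \mathrm{PF}(S)$. By Corollary \ref{cor1}, the vector $\mathbf{b} := \mathbf{a} + \sum_{i=1}^n \mathbf{a}_i$ is an $S$-degree of the $(n-2)$th minimal syzygy of $\Bbbk[S]$. Since $S$ has maximal projective dimension, its projective dimension equals $p = n-1$, so the index $j = n-2$ is a legitimate value in the range $\{1, \ldots, p\}$ to which Theorem \ref{t34} applies. Thus I would first invoke Theorem \ref{t34} with $j = n-2$ to obtain a representation $\mathbf{b} = A\mathbf{u}$ with $\mathbf{u} \in \mathbb{N}^n$ and an explicit bound on $\ell(\mathbf{u})$.

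Next I would specialize the exponent and the additive constant appearing in Theorem \ref{t34} to the case $j = n-2$. Here $d_j = \binom{n}{j+1} = \binom{n}{n-1} = n$, so the exponent $d(d_j - 1)$ becomes $d(n-1)$, and the constant $(j+1)d_j - 1$ becomes $(n-1)\cdot n - 1 = n^2 - n - 1$. Substituting these into the bound of Theorem \ref{t34} gives
\[
\ell(\mathbf{u}) \leq (1 + 4\, \vert\vert A \vert\vert_\infty)^{d(n-1)} + n^2 - n - 1.
\]
This is the core numerical simplification; it is routine but must be done carefully since the final claimed constant in the corollary is $n^2 - 1$ rather than $n^2 - n - 1$.

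The final step is to pass from the representation of $\mathbf{b}$ to a representation of $\mathbf{a}$ itself. Writing $\mathbf{1} = (1, \ldots, 1) \in \mathbb{N}^n$, we have $\mathbf{b} = A(\mathbf{v} + \mathbf{1})$ precisely when $\mathbf{a} = A\mathbf{v}$, so I would set $\mathbf{v} := \mathbf{u} - \mathbf{1}$. The subtlety is that $\mathbf{u} - \mathbf{1}$ need not be a nonnegative vector if some coordinate $u_i$ is zero, whereas the corollary requires $\mathbf{v} \in \mathbb{N}^n$. The clean way around this is to observe that since $\mathbf{a} + \mathbf{a}_j \in S$ for every $j$, the element $\mathbf{a} = \mathbf{b} - \sum_i \mathbf{a}_i$ itself lies in $S$ whenever $\mathbf{a} \geq 0$; but in general $\mathbf{a} \in \mathcal{H}(S) \subseteq \mathrm{pos}(S) \cap \mathbb{N}^d$, so $\mathbf{a}$ is a genuine nonnegative integer point of the cone and hence admits \emph{some} nonnegative integer representation $\mathbf{a} = A\mathbf{v}$, $\mathbf{v} \in \mathbb{N}^n$ (because $\mathbf{a} \in \mathbb{N}^d \cap \mathrm{pos}(S)$ and $\mathcal{A}$ generates $S$ as a monoid need not suffice — rather, one uses that $\mathbf{a}$, being a pseudo-Frobenius element, satisfies $\mathbf{a} + \mathbf{a}_1 \in S$, giving $\mathbf{a} + \mathbf{a}_1 = A\mathbf{w}$ and then $\mathbf{a} = A(\mathbf{w} - \mathbf{e}_1)$ with length controlled by $\ell(\mathbf{w})$).

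The main obstacle, and the point requiring the most care, is reconciling the additive constants: the natural reduction yields $n^2 - n - 1$, while the corollary states $n^2 - 1$. Since $n^2 - n - 1 \leq n^2 - 1$ for all $n \geq 0$, the stated bound is a valid (if slightly loose) upper bound, so I would simply remark that the sharper constant $n^2 - n - 1$ dominates the subtraction of the length of $\mathbf{1}$: passing from $\ell(\mathbf{u})$ to $\ell(\mathbf{v}) = \ell(\mathbf{u}) - n$ (when $\mathbf{u} \geq \mathbf{1}$) would in fact \emph{improve} the constant, and in the general case the coarser bound $n^2 - 1$ absorbs any discrepancy arising from coordinates of $\mathbf{u}$ that vanish. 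I expect the cleanest exposition is to bound $\ell(\mathbf{v})$ by $\ell(\mathbf{u})$ directly (dropping at most the full weight of $\mathbf{1}$) and then cite the monotonicity $n^2 - n - 1 \leq n^2 - 1$, so that no delicate case analysis on the positivity of individual $u_i$ is needed.
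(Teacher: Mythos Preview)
Your approach is the paper's: pass from $\mathbf{a}$ to $\mathbf{b}=\mathbf{a}+\sum_i\mathbf{a}_i$ via Corollary~\ref{cor1}, apply Theorem~\ref{t34} at $j=n-2$ (where $d_{n-2}=\binom{n}{n-1}=n$), and set $\mathbf{v}=\mathbf{u}-\mathbf{1}$. The only real difference is the accounting for the constant $n^2-1$. You arrive at the bound $\ell(\mathbf{u})\le (1+4\|A\|_\infty)^{d(n-1)}+n^2-n-1$ and then relax $n^2-n-1$ to $n^2-1$; the paper instead writes $\|\mathbf{v}\|_1\le \|\mathbf{u}\|_1+\|\mathbf{1}\|_1=\|\mathbf{u}\|_1+n$ via the triangle inequality, so that the extra $+n$ turns $(n-1)n-1$ into $n^2-1$ directly. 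Your route actually yields a slightly sharper inequality; the paper's explains why the particular constant $n^2-1$ was stated. The positivity concern you flag---that $\mathbf{u}-\mathbf{1}$ need not lie in $\mathbb{N}^n$ because Theorem~\ref{t34} does not guarantee $u_i\ge 1$---is legitimate, and the paper's proof does not address it either: it simply sets $\mathbf{v}=\mathbf{u}-\mathbf{1}$ and proceeds with the $\ell^1$-norm estimate without verifying nonnegativity.
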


\begin{proof}
Assuming $\mathbf{a}$ is a pseudo-Frobenius element of $S$, by Corollary \ref{cor1}, there exists $\mathbf{b}=A \mathbf u \in S$ for some $ \mathbf u\in\mathbb{N}^n,$ with $V_{n-2}(\mathbf{b}) \neq \varnothing$ and such that $\mathbf{a}=\mathbf{b}-\sum_{i=1}^n \mathbf a_i$; in particular $\mathbf{a}= A \mathbf{u}-A\, \mathbf{1}=A(\mathbf{u}-\mathbf{1})$. Consider $\mathbf{v}=\mathbf{u}-\mathbf{1}$. Note that $\vert \vert \mathbf{v} \vert \vert _1\le \vert \vert \mathbf{u} \vert \vert _1 + \vert \vert \mathbf{1} \vert \vert _1= \vert \vert \mathbf{u} \vert \vert _1 +n$. By Theorem \ref{t34}, $\vert \vert \mathbf{u} \vert \vert _1\le (1 + 4 \vert \vert A \vert \vert _{\infty})^{d(d_j-1)} + (j+1)d_j -1$ where $j=n-2$ and $d_{n-2}=n$. So,
\begin{align*}
\ell(\mathbf{v}) & \le  (1 + 4\, \vert \vert A \vert \vert _{\infty})^{d(n-1)} + (n-1)n-1+n\\ & = (1 + 4 \vert \vert A \vert \vert _{\infty})^{d(n-1)} + n^2-1,
\end{align*}
as claimed.
\end{proof}

Note that, given any affine semigroup and the graded minimal free resolution of its associated algebra over a field, Theorem \ref{Th1} and Corollary \ref{cor1} allow us to check if the semigroup has pseudo-Frobenius elements and, in affirmative case, to compute them. Thus, the combination of both results provide an algorithm for the computation of the pseudo-Frobenius elements of a affine semigroup, provided that they exist (i.e. if the depth of the algebra is one). The following example illustrates this fact:

\begin{example}
Let $S$ be the multiple convex body semigroup associated to the convex hull $\mathcal{P}$ of the set $\{(1.2, .35), (1.4, 0), (1.5, 0), (1.4, 1)\}$, that is to say, \[S=\displaystyle{\bigcup_{k \in \mathbb{N}}\, k\, \mathcal{P}\cap \mathbb{N}^2}.\]
\begin{center}
\includegraphics[scale=.38]{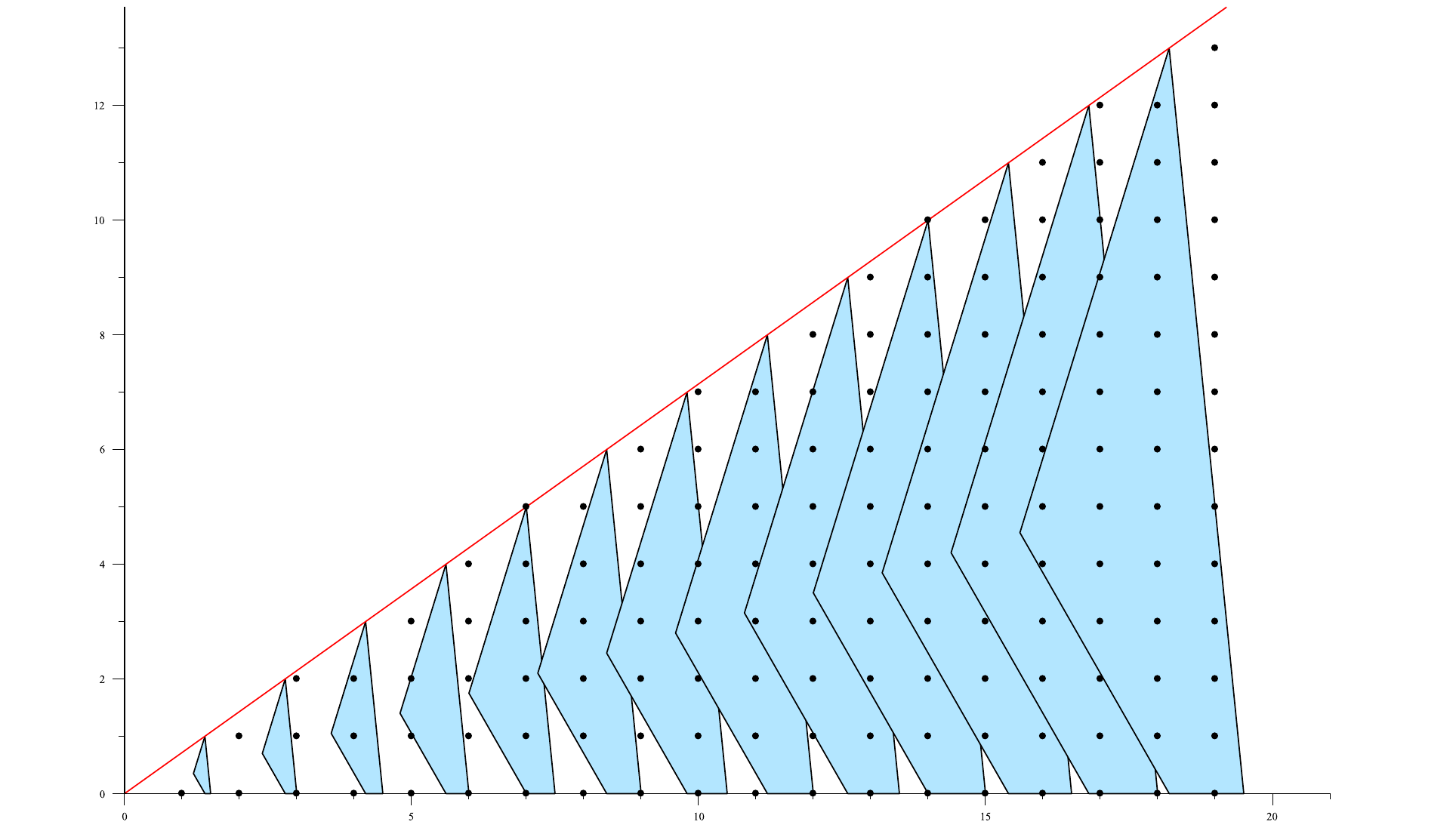}
\end{center}
Using the \texttt{Mathematica} package \texttt{PolySGTools} introduced in \cite{multiple}, we obtain that the minimal generating system of $S$ is the set of columns of the following matrix
\[
A=\left(
\begin{array}{cccccccccc}
 3 & 4 & 4 & 5 & 7 & 7 & 7 & 7 & 8 & 9 \\
 0 & 1 & 2 & 2 & 0 & 3 & 4 & 5 & 1 & 2 \\
\end{array}
\right).
\]
Now, we can easily check that $S$ is not a $\mathcal{C}-$semigroup, because $(3,2)+\lambda (7,5) \in \mathcal{H}(S),$ for every $\lambda \in \mathbb{N}$. Moreover, we can compute the $S-$graded minimal free resolution of $\Bbbk[S]$ using Singular (\cite{DGPS}) as follows:
\begin{verbatim}
   LIB "toric.lib";
   LIB "multigrading.lib";
   ring r = 0, (x(1..10)), dp;
   intmat A[2][10] = 3, 4, 4, 5, 7, 7, 7, 7, 8, 9,
                     0, 1, 2, 2, 0, 3, 4, 5, 1, 2;
   setBaseMultigrading(A);
   ideal i = toric_ideal(A,"ect");
   def L = multiDegResolution(i,9,0);
\end{verbatim}
Finally, using the command \texttt{multiDeg(L[9])}, we obtain that the degrees of the minimal generators of the $9-$th syzygy module are $(72,20)$ and $(73,21)$. So, $S$ has two pseudo-Frobenius elements: $(11,0)=(72,20)- (61,20)$ and $(12,1)=(73,21)- (61,20)$.
\end{example}

\section{On Frobenius elements of MPD-semigroups}\label{S3}

Throughout this section, $S$ will be a MPD-semigroup generated by $\mathcal{A} = \{\mathbf{a}_1, \ldots, \mathbf{a}_n\} \subset \mathbb{N}^d$.

\begin{definition}
We say that $\mathbf{f} \in \mathcal{H}(S)$ is a \textbf{Frobenius element} of $S$ if
$\mathbf{f} = \max_{\prec} \mathcal{H}(S)$ for some term order $\prec$ on $\mathbb{N}^d.$ Let us write $\mathrm{F}(S)$ for the set of Frobenius elements of $S$.
\end{definition}

Frobenius elements of $S$ may not exist. However, if $S$ is a $\mathcal C-$semigroup, then it has Frobenius elements because $\mathcal{H}(S)$ is finite.

\begin{lemma}\label{lemma1}
Every Frobenius element of $S$ is a pseudo-Frobenius element of $S,$ in symbols: $\mathrm{F}(S) \subseteq \mathrm{PF}(S)$.
\end{lemma}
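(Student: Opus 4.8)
The plan is to argue by contradiction using the maximality that defines a Frobenius element. Let $\mathbf{f} \in \mathrm{F}(S)$, so $\mathbf{f} = \max_{\prec}\mathcal{H}(S)$ for some term order $\prec$ on $\mathbb{N}^d$. Since $\mathbf{f} \in \mathcal{H}(S) = (\mathrm{pos}(S)\setminus S)\cap \mathbb{N}^d$, it remains to check the defining condition of $\mathrm{PF}(S)$, namely $\mathbf{f} + S\setminus\{0\} \subseteq S$.

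First I would take an arbitrary $\mathbf{s} \in S\setminus\{0\}$ and suppose, for contradiction, that $\mathbf{f} + \mathbf{s} \notin S$. Then I would observe that $\mathbf{f} + \mathbf{s} \in \mathrm{pos}(S)$: indeed $\mathbf{f} \in \mathrm{pos}(S)$ and $\mathbf{s} \in S \subseteq \mathrm{pos}(S)$, and $\mathrm{pos}(S)$ is closed under addition since it is a convex cone. Also $\mathbf{f}+\mathbf{s} \in \mathbb{N}^d$. Hence $\mathbf{f}+\mathbf{s} \in (\mathrm{pos}(S)\setminus S)\cap\mathbb{N}^d = \mathcal{H}(S)$.

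Next I would use the compatibility of the term order with addition: since $\prec$ is a term order on $\mathbb{N}^d$, it is compatible with translations, so $\mathbf{s} \neq 0$ forces $\mathbf{f}+\mathbf{s} \succ \mathbf{f}$ (one needs here that every nonzero element of $\mathbb{N}^d$ is $\succ 0$, which is part of the definition of a term order). This contradicts $\mathbf{f} = \max_{\prec}\mathcal{H}(S)$, because $\mathbf{f}+\mathbf{s}$ is an element of $\mathcal{H}(S)$ strictly larger than $\mathbf{f}$. Therefore $\mathbf{f}+\mathbf{s} \in S$ for all $\mathbf{s} \in S\setminus\{0\}$, which gives $\mathbf{f} \in \mathrm{PF}(S)$, and since $\mathbf{f}$ was arbitrary, $\mathrm{F}(S) \subseteq \mathrm{PF}(S)$.

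The argument is essentially immediate; the only point that requires a little care is spelling out exactly which two properties of a term order are invoked — positivity of nonzero elements ($\mathbf{0} \prec \mathbf{s}$ for $\mathbf{s} \in \mathbb{N}^d\setminus\{0\}$) and translation-invariance ($\mathbf{u}\prec\mathbf{v} \Rightarrow \mathbf{u}+\mathbf{w}\prec\mathbf{v}+\mathbf{w}$) — so that the inequality $\mathbf{f} \prec \mathbf{f}+\mathbf{s}$ is justified. There is no real obstacle; the content is just the observation that $\mathcal{H}(S)$ is stable under adding elements of $S$ precisely when the result stays outside $S$, and maximality then rules that out.
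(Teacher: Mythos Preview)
Your proof is correct and follows exactly the same approach as the paper's: argue by contradiction that if $\mathbf{f}+\mathbf{s}\notin S$ then $\mathbf{f}+\mathbf{s}\in\mathcal{H}(S)$ with $\mathbf{f}\prec\mathbf{f}+\mathbf{s}$, contradicting maximality. Your version is slightly more explicit in justifying why $\mathbf{f}+\mathbf{s}\in\mathrm{pos}(S)$ and in naming the term-order axioms used, but the argument is identical.
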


\begin{proof}
If $\mathbf{f} \in \mathrm{F}(S)$, then there is a term order $\prec$ on $\mathbb{N}^d$ such that $\mathbf f = \max_{\prec} \mathcal{H}(S)$. If there exists $\mathbf{a} \in S \setminus \{0\}$ such that $\mathbf f + \mathbf a \not\in S$, then $\mathbf f \prec \mathbf f + \mathbf a \in \mathcal{H}(S)$, in contradiction to the maximality of $\mathbf{f}$. Therefore $\mathbf{f} \in \mathrm{PF}(S)$.
\end{proof}

The following notion of Frobenius vectors was introduced in \cite{AGSO}: we say that $S$ has a \textbf{Frobenius vector} if there exists $\mathbf f\in G(\mathcal{A})\setminus S$ such that \[\mathbf f+ \mathrm{relint}(\mathrm{pos}(S)) \cap G(\mathcal{A})\subseteq S\setminus\{0\}\subseteq S,\] where $G(\mathcal{A})$ denotes the group generated by $\mathcal{A}$ in $\mathbb{Z}^d$ and $\mathrm{relint}(\mathrm{pos}(S))$ the relative interior of the cone $\mathrm{pos}(S)$.

\begin{proposition}
Every Frobenius element of $S$ is a Frobenius vector of $S$.
\end{proposition}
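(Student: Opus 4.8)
The plan is to verify the two defining conditions of a Frobenius vector directly for $\mathbf{f} := \max_{\prec}\mathcal{H}(S)$, where $\prec$ is the term order witnessing $\mathbf{f}\in\mathrm{F}(S)$.

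First I would check that $\mathbf{f}\in G(\mathcal{A})\setminus S$. That $\mathbf{f}\notin S$ and $\mathbf{f}\in\mathbb{N}^d$ is immediate from $\mathbf{f}\in\mathcal{H}(S)=(\mathrm{pos}(S)\setminus S)\cap\mathbb{N}^d$; note also $\mathbf{f}\neq 0$, since $0\in S$, and therefore $\mathrm{pos}(S)\neq\{0\}$, so $\mathcal{A}$ contains a nonzero element $\mathbf{a}_i$. By Lemma \ref{lemma1}, $\mathbf{f}\in\mathrm{PF}(S)$, hence $\mathbf{f}+\mathbf{a}_i\in S$, and so $\mathbf{f}=(\mathbf{f}+\mathbf{a}_i)-\mathbf{a}_i\in G(\mathcal{A})$.

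For the second condition, I would take an arbitrary $\mathbf{g}\in\bigl(\mathbf{f}+\mathrm{relint}(\mathrm{pos}(S))\bigr)\cap G(\mathcal{A})$ and write $\mathbf{g}=\mathbf{f}+\mathbf{r}$ with $\mathbf{r}\in\mathrm{relint}(\mathrm{pos}(S))$. The crucial observation is that $\mathbf{r}=\mathbf{g}-\mathbf{f}$ is a difference of two elements of $\mathbb{Z}^d$, so $\mathbf{r}\in\mathbb{Z}^d\cap\mathrm{pos}(S)\subseteq\mathbb{N}^d$ (as $\mathrm{pos}(S)\subseteq\mathbb{Q}^d_{\geq 0}$); moreover $\mathbf{r}\neq 0$, because the apex $0$ of the pointed nonzero cone $\mathrm{pos}(S)$ does not lie in its relative interior. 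Consequently $\mathbf{g}=\mathbf{f}+\mathbf{r}\in\mathbb{N}^d$ and $\mathbf{g}\in\mathrm{pos}(S)$ (a cone is closed under addition), so $\mathbf{g}\in\mathrm{pos}(S)\cap\mathbb{N}^d$ lies either in $S$ or in $\mathcal{H}(S)$. Since $\mathbf{r}\in\mathbb{N}^d\setminus\{0\}$, any term order gives $\mathbf{g}=\mathbf{f}+\mathbf{r}\succ\mathbf{f}$; were $\mathbf{g}\in\mathcal{H}(S)$ this would contradict $\mathbf{f}=\max_{\prec}\mathcal{H}(S)$. Hence $\mathbf{g}\in S$, and $\mathbf{g}\neq 0$ because $\mathbf{g}=\mathbf{f}+\mathbf{r}$ with $\mathbf{f},\mathbf{r}\in\mathbb{N}^d$ and $\mathbf{r}\neq 0$. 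This gives $\bigl(\mathbf{f}+\mathrm{relint}(\mathrm{pos}(S))\bigr)\cap G(\mathcal{A})\subseteq S\setminus\{0\}$, as required.

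I do not anticipate a real obstacle: the whole argument reduces to the elementary fact that a difference of lattice points of $\mathrm{pos}(S)$ is again a lattice point of $\mathrm{pos}(S)$ (hence automatically in $\mathbb{N}^d$), combined with the strict monotonicity of term orders under addition of a nonzero vector. The only point to state with some care is why $0\notin\mathrm{relint}(\mathrm{pos}(S))$, which uses that $\mathrm{pos}(S)\subseteq\mathbb{Q}^d_{\geq 0}$ is pointed and, by $\mathbf{f}\in\mathcal{H}(S)$ with $\mathbf{f}\neq 0$, nonzero.
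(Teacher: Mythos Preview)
Your argument is correct and follows essentially the same route as the paper's proof: show $\mathbf{f}\in G(\mathcal{A})\setminus S$ via $\mathbf{f}=(\mathbf{f}+\mathbf{a}_i)-\mathbf{a}_i$, then use that $\mathbf{f}+\mathbf{r}\in\mathrm{pos}(S)\cap\mathbb{N}^d$ together with the maximality of $\mathbf{f}$ in $\mathcal{H}(S)$ to force $\mathbf{f}+\mathbf{r}\in S$. The only cosmetic difference is that the paper parses the defining inclusion as $\mathbf{f}+\bigl(\mathrm{relint}(\mathrm{pos}(S))\cap G(\mathcal{A})\bigr)\subseteq S\setminus\{0\}$ and splits into the cases $\mathbf{b}\in S$ (handled by Lemma~\ref{lemma1}) and $\mathbf{b}\notin S$ (handled by maximality), whereas you treat both cases uniformly by the maximality argument; since $\mathbf{f}\in G(\mathcal{A})$, your reading of the inclusion is equivalent to the paper's. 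One small wording issue: your closing summary that ``a difference of lattice points of $\mathrm{pos}(S)$ is again a lattice point of $\mathrm{pos}(S)$'' is false as stated---what you actually use (and prove correctly in the body) is only that $\mathbf{r}=\mathbf{g}-\mathbf{f}\in\mathbb{Z}^d$, while $\mathbf{r}\in\mathrm{pos}(S)$ comes from the hypothesis $\mathbf{r}\in\mathrm{relint}(\mathrm{pos}(S))$, not from any closure of the cone under differences.
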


\begin{proof}
Let $\mathbf f \in \mathrm{F}(S)$. Since $\mathbf{a} := \mathbf f + \mathbf a_1 \in S$, then $\mathbf{f} = \mathbf{a} - \mathbf{a}_1 \in G(\mathcal{A}) \setminus S$. Let $\mathbf{b} \in \mathrm{relint}(\mathrm{pos}(S)) \cap G(\mathcal{A})$. If $\mathbf{b} \in S$,  then  $\mathbf{f}+\mathbf{b} \in S$ by Lemma \ref{lemma1}. If $\mathbf{b} \not\in S$,  then $\mathbf{b} \in \mathcal{H}(S) = (\mathrm{pos}(S) \setminus S) \cap \mathbb{N}^d$ and therefore either $\mathbf{f}+\mathbf{b} \in S$ or $\mathbf{f}+\mathbf{b} \in \mathcal{H}(S)$. However, since $\mathbf{f}+\mathbf{b}$ is greater than $\mathbf{f}$ for every term order on $\mathbb{N}^d$, we are done.
\end{proof}

As a consequence of the above result, we have that the set of $\mathcal{C}$-semigroups is a new family of affine semigroups for which Frobenius vectors exist.

Although Frobenius vectors may not exist in general, there are families of submonoids of $\mathbb{N}^d$ with Frobenius vectors that are not MPD-semigroups (see \cite{AGSO}). However, even for MPD-semigroups, the converse of the above proposition does not hold in general, for instance, the MPD-semigroup in Example \ref{Ex no C-semig} has a Frobenius vector which is not a Frobenius element.

Now, similarly to the numerical case ($n=1$), we can give a Selmer's formula type (see \cite[Proposition 2.12(a)]{ns-book}) for $\mathcal{C}-$semigroups. To do this, we need to recall the notion of Ap\'ery set.

\begin{definition}\label{Def Apery}
The \textbf{Ap\'ery set} of a submonoid $S$ of $\mathbb{N}^d$ relative to $\mathbf{b} \in S \setminus \{0\}$ is defined as $\mathrm{Ap}(S,\mathbf{b}) = \{\mathbf{a} \in S\ \mid\ \mathbf{a}-\mathbf{b} \in \mathrm{pos}(S) \setminus S\}.$
\end{definition}

Clearly $\mathrm{Ap}(S,\mathbf{b}) - \mathbf{b} \subseteq \mathcal{H}(S)$; in particular, if $S$ is $\mathcal{C}-$semigroup, we have that $\mathrm{Ap}(S,\mathbf{b})$ is finite for every $\mathbf{b} \in S\setminus\{0\}$.

\begin{proposition}\label{Prop6-Cor7}
Let $S$ be a submonoid of $\mathbb{N}^d$ and $\mathbf b \in S \setminus \{0\}$. For each $\mathbf a \in S$ there exists an unique $(k, \mathbf{c}) \in \mathbb N \times \mathrm{Ap}(S, \mathbf b)$ such that $\mathbf a = k\, \mathbf b + \mathbf c$. In particular, $\big(\mathrm{Ap}(S, \mathbf b) \setminus \{0\} \big) \cup \{\mathbf b\}$ is a system of generators of $S$.
\end{proposition}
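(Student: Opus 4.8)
The plan is to prove existence of the decomposition first and uniqueness second, then derive the generating-set statement as an easy corollary. For existence, fix $\mathbf a \in S$. Since $\mathbf b \in S \setminus \{0\}$, the set of nonnegative integers $k$ with $\mathbf a - k\,\mathbf b \in S$ is nonempty (it contains $0$); moreover it is bounded above, because $\mathbf a - k\,\mathbf b$ must lie in $\mathbb N^d$ and $\mathbf b \neq 0$ forces some coordinate of $k\,\mathbf b$ to grow without bound. Let $k$ be the largest such integer and set $\mathbf c := \mathbf a - k\,\mathbf b \in S$. Then $\mathbf c - \mathbf b \notin S$ by maximality of $k$; and $\mathbf c - \mathbf b = \mathbf a - (k+1)\mathbf b$. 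It remains to check $\mathbf c - \mathbf b \in \mathrm{pos}(S)$. I would argue: if $k \geq 1$ then $\mathbf c - \mathbf b = (\mathbf a - \mathbf b) - (k-1)\mathbf b$; more directly, since $\mathbf c \in S \subseteq \mathrm{pos}(S)$ and $\mathbf b$ is a nonzero element of $S$, one wants $\mathbf c - \mathbf b$ still in the cone. This is the one point that needs care when $\mathbf c$ is "small"; but here $\mathbf c - \mathbf b = \mathbf a - (k+1)\mathbf b$ and we can instead observe that the \emph{largest} $k$ with $\mathbf a - k\,\mathbf b \in \mathrm{pos}(S) \cap \mathbb N^d$ is at least as large, so replacing the stopping rule by "largest $k$ with $\mathbf a - k\mathbf b \in \mathrm{pos}(S)$ and $\mathbf a-k\mathbf b\in\mathbb N^d$" and then noting such $\mathbf a - k\mathbf b$ is automatically in $S$ or in $\mathcal H(S)$... — cleanest is: take $k$ maximal with $\mathbf a - k\mathbf b \in S$; then $\mathbf c - \mathbf b \notin S$, and $\mathbf c - \mathbf b = \mathbf a - (k+1)\mathbf b$ lies in $\mathrm{pos}(S)$ provided $k+1$ is not "too big", which holds because $\mathbf a \in \mathrm{pos}(S)$ and one checks $\mathbf a - (k+1)\mathbf b \in \mathrm{pos}(S)$ directly from $\mathbf a = \mathbf c + k\mathbf b$ with $\mathbf c \in S$: indeed $\mathbf a - (k+1)\mathbf b = \mathbf c - \mathbf b$ and since $\mathbf b, \mathbf c \in S$ we may write things out — this membership in the cone is exactly the expected main obstacle and is handled by the separating-hyperplane description of $\mathrm{pos}(S)$ as in the proof of Theorem~\ref{Th1}.

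For uniqueness, suppose $\mathbf a = k\,\mathbf b + \mathbf c = k'\,\mathbf b + \mathbf c'$ with $(k,\mathbf c),(k',\mathbf c') \in \mathbb N \times \mathrm{Ap}(S,\mathbf b)$ and, say, $k \leq k'$. Then $\mathbf c = (k'-k)\mathbf b + \mathbf c'$. If $k' > k$, then $\mathbf c - \mathbf b = (k'-k-1)\mathbf b + \mathbf c' \in S$ (as a sum of elements of $S$), contradicting $\mathbf c \in \mathrm{Ap}(S,\mathbf b)$, which requires $\mathbf c - \mathbf b \notin S$. Hence $k = k'$, and then $\mathbf c = \mathbf c'$.

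Finally, for the generating-set claim: let $T$ be the submonoid of $S$ generated by $\big(\mathrm{Ap}(S,\mathbf b)\setminus\{0\}\big)\cup\{\mathbf b\}$. Every $\mathbf a \in S$ equals $k\,\mathbf b + \mathbf c$ with $\mathbf c \in \mathrm{Ap}(S,\mathbf b)$; since $0 \in \mathrm{Ap}(S,\mathbf b)$ (as $-\mathbf b \notin \mathbb N^d \supseteq S$, indeed $-\mathbf b \in \mathrm{pos}(S)\setminus S$ is false, so one must instead note $0 - \mathbf b \notin S$ trivially and $0-\mathbf b\in\mathrm{pos}(S)$ need not hold — so $0$ may or may not be in $\mathrm{Ap}(S,\mathbf b)$; regardless, if $\mathbf c = 0$ then $\mathbf a = k\mathbf b \in T$, and if $\mathbf c \neq 0$ then $\mathbf c \in \mathrm{Ap}(S,\mathbf b)\setminus\{0\}$ so again $\mathbf a = k\mathbf b + \mathbf c \in T$). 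Thus $S \subseteq T \subseteq S$, giving $S = T$, as claimed. I expect the only genuinely delicate point to be verifying $\mathbf c - \mathbf b \in \mathrm{pos}(S)$ in the existence step; everything else is bookkeeping with the definition of the Apéry set.
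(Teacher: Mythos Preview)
Your approach---take $k$ maximal with $\mathbf a - k\mathbf b \in S$ and set $\mathbf c = \mathbf a - k\mathbf b$---is exactly the paper's (one-line) argument. Your uniqueness and generating-set paragraphs are correct and supply details the paper omits entirely.

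You are right to isolate the membership $\mathbf c - \mathbf b \in \mathrm{pos}(S)$ as the delicate point, and you are also right that you have not resolved it: your appeal to the separating-hyperplane argument of Theorem~\ref{Th1} does not apply here, since that argument needs $(\mathbf c - \mathbf b) + \mathbf a_j \in S$ for every generator $\mathbf a_j$, which you do not have. In fact the point \emph{cannot} be resolved under Definition~\ref{Def Apery} as written. Already $\mathbf a = 0$ admits no decomposition, because $-\mathbf b \notin \mathrm{pos}(S)$ forces $0 \notin \mathrm{Ap}(S,\mathbf b)$; more drastically, for $S=\mathbb N^2$ and $\mathbf b=(1,0)$ one checks $\mathrm{Ap}(S,\mathbf b)=\varnothing$, so no $\mathbf a\in S$ decomposes at all. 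The paper's own examples (listing $(0,0)\in \mathrm{Ap}(S_1,(2,2))$ and $(0,0)\in \mathrm{Ap}(S_2,(1,1))$ in Section~5) make clear that the intended definition is the classical one, $\mathrm{Ap}(S,\mathbf b)=\{\mathbf a\in S:\mathbf a-\mathbf b\notin S\}$. Under that reading the $\mathrm{pos}(S)$ obstruction vanishes, your existence argument goes through immediately, and the rest of your proof is complete. So the gap you worried about is a defect of the stated definition rather than of your reasoning; the paper's one-line proof glosses over the same issue.
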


\begin{proof}
If suffices to take $k$ as the highest non-negative integer such that $\mathbf b - k \mathbf a \in S$.
\end{proof}

The following result is the generalization of \cite[Proposition 2.12(a)]{ns-book}.

\begin{theorem}\label{Th Selmer}
If $\mathbf{f} \in \mathrm{F}(S)$, there exists a term order $\prec$ on $\mathbb{N}^d$ such that \[\mathbf f = \max_{\prec} \mathrm{Ap}(S,\mathbf{b}) - \mathbf{b},\] for every $\mathbf{b} \in S \setminus \{0\}$.
\end{theorem}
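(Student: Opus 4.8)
The plan is to show that for a Frobenius element $\mathbf{f}$, the *same* term order $\prec$ that witnesses $\mathbf{f} = \max_{\prec} \mathcal{H}(S)$ also witnesses $\mathbf{f} = \max_{\prec} \mathrm{Ap}(S,\mathbf{b}) - \mathbf{b}$ for every $\mathbf{b} \in S \setminus \{0\}$. Fix such a $\prec$ and fix $\mathbf{b}$. First I would check that $\mathbf{f} + \mathbf{b} \in \mathrm{Ap}(S,\mathbf{b})$: indeed, by Lemma \ref{lemma1} we have $\mathbf{f} \in \mathrm{PF}(S)$, so $\mathbf{f} + \mathbf{b} \in S$, and $(\mathbf{f}+\mathbf{b}) - \mathbf{b} = \mathbf{f} \in \mathcal{H}(S) = (\mathrm{pos}(S)\setminus S) \cap \mathbb{N}^d$, which is precisely the membership condition in Definition \ref{Def Apery}. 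Hence $\mathbf{f} = (\mathbf{f}+\mathbf{b}) - \mathbf{b} \in \mathrm{Ap}(S,\mathbf{b}) - \mathbf{b}$, so the maximum on the right-hand side is at least $\mathbf{f}$.

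For the reverse inequality, take any $\mathbf{c} \in \mathrm{Ap}(S,\mathbf{b})$ and set $\mathbf{w} := \mathbf{c} - \mathbf{b}$. By definition of the Ap\'ery set, $\mathbf{w} \in \mathrm{pos}(S) \setminus S$, and since $\mathbf{c} \in S \subseteq \mathbb{N}^d$ while $\mathbf{b} \in S$, one must argue $\mathbf{w} \in \mathbb{N}^d$; this follows because $\mathbf{c} - \mathbf{b} \in \mathrm{pos}(S)$ together with $\mathbf{c} \in S$ forces... — more carefully, $\mathbf{w} \in \mathcal{H}(S)$ will follow once we know $\mathbf{w} \in \mathbb{N}^d$, and the standard observation $\mathrm{Ap}(S,\mathbf{b}) - \mathbf{b} \subseteq \mathcal{H}(S)$ noted right after Definition \ref{Def Apery} already gives exactly this. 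So $\mathbf{w} \in \mathcal{H}(S)$, and by the defining property of the Frobenius element, $\mathbf{w} \preceq \mathbf{f}$. Therefore every element of $\mathrm{Ap}(S,\mathbf{b}) - \mathbf{b}$ is $\preceq \mathbf{f}$, which combined with the previous paragraph gives $\mathbf{f} = \max_{\prec} \mathrm{Ap}(S,\mathbf{b}) - \mathbf{b}$.

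Since $\mathbf{b} \in S \setminus \{0\}$ was arbitrary and the term order $\prec$ was chosen once and for all (depending only on $\mathbf{f}$, not on $\mathbf{b}$), this proves the statement. The argument is essentially a two-line containment chain, so I do not anticipate a genuine obstacle; the only point requiring a moment's care is making explicit that $\mathrm{Ap}(S,\mathbf{b}) - \mathbf{b} \subseteq \mathcal{H}(S)$, i.e.\ that these differences land in $\mathbb{N}^d$ and not merely in $\mathbb{Z}^d$, but this is exactly the remark recorded immediately after Definition \ref{Def Apery}, so it can be cited rather than reproved.
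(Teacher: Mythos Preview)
Your proposal is correct and follows essentially the same approach as the paper's proof: fix the term order $\prec$ witnessing $\mathbf{f}=\max_{\prec}\mathcal{H}(S)$, show $\mathbf{f}+\mathbf{b}\in\mathrm{Ap}(S,\mathbf{b})$ via Lemma~\ref{lemma1}, and then use $\mathrm{Ap}(S,\mathbf{b})-\mathbf{b}\subseteq\mathcal{H}(S)$ together with the maximality of $\mathbf{f}$ in $\mathcal{H}(S)$ to get the reverse inequality. The only cosmetic difference is that the paper phrases the second half as a proof by contradiction (assume some $\mathbf{a}\in\mathrm{Ap}(S,\mathbf{b})$ with $\mathbf{f}+\mathbf{b}\prec\mathbf{a}$ and derive a violation of anti-symmetry), whereas you argue directly; the content is identical.
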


\begin{proof}
By definition, there exists a term order $\prec$ on $\mathbb{N}^d$ such that $\mathbf{f} =  \max_\prec \mathcal{H}(S)$. By Lemma \ref{lemma1}, $\mathbf{f} + \mathbf{b} \in S$ and clearly $(\mathbf{f} + \mathbf{b}) - \mathbf{b} = \mathbf{f} \not\in \mathrm{pos}(S) \setminus S.$ Thus, $\mathbf{f} + \mathbf{b} \in \mathrm{Ap}(S,\mathbf{b})$. Now, suppose that there exist $\mathbf a \in \mathrm{Ap}(S,\mathbf{b})$ such that $\mathbf f + \mathbf b \prec \mathbf a$. In this case, $\mathbf{a} - \mathbf{b} \in \mathcal{H}(S)$, so $\mathbf{a} - \mathbf{b} \preceq \mathbf{f}$ and therefore $\mathbf{a} \prec \mathbf{f} + \mathbf{b}$ which contradicts the anti-symmetry property of $\prec.$
\end{proof}

Next result generalizes \cite[Proposition 2.20]{ns-book}.

\begin{proposition}\label{Prop ApPF}
Let $S$ be a submonoid of $\mathbb{N}^d$ and $\mathbf{b} \in S \setminus \{0\}$. Then $\mathrm{PF}(S) \neq \varnothing$ if and only if $\mathrm{maximals}_{\preceq_S} \mathrm{Ap}(S,\mathbf{b}) \neq \varnothing$. In this case, \begin{equation}\label{ecu1} \mathrm{PF}(S) = \{\mathbf{a} - \mathbf{b} \mid \mathbf{a} \in \mathrm{maximals}_{\preceq_S} \mathrm{Ap}(S,\mathbf{b}) \}.\end{equation}
\end{proposition}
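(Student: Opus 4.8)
The plan is to show that translation by $\mathbf{b}$ sets up a bijection between $\mathrm{PF}(S)$ and $\mathrm{maximals}_{\preceq_S}\mathrm{Ap}(S,\mathbf{b})$, where $\preceq_S$ is the usual order on $S$, namely $\mathbf{x}\preceq_S\mathbf{y} \iff \mathbf{y}-\mathbf{x}\in S$ (a partial order because $S\cap(-S)=\{0\}$). Concretely I would prove (i) if $\mathbf{w}\in\mathrm{PF}(S)$ then $\mathbf{w}+\mathbf{b}$ is a $\preceq_S$-maximal element of $\mathrm{Ap}(S,\mathbf{b})$, and (ii) if $\mathbf{a}$ is a $\preceq_S$-maximal element of $\mathrm{Ap}(S,\mathbf{b})$ then $\mathbf{a}-\mathbf{b}\in\mathrm{PF}(S)$. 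Since $\mathbf{w}\mapsto\mathbf{w}+\mathbf{b}$ and $\mathbf{a}\mapsto\mathbf{a}-\mathbf{b}$ are mutually inverse, (i) and (ii) force $\mathrm{maximals}_{\preceq_S}\mathrm{Ap}(S,\mathbf{b})=\mathrm{PF}(S)+\mathbf{b}$; this yields simultaneously the claimed equivalence (one set is empty precisely when the other is) and formula \eqref{ecu1}, and it does so without ever needing $\mathrm{Ap}(S,\mathbf{b})$ to be finite.

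For (i): if $\mathbf{w}\in\mathrm{PF}(S)\subseteq\mathcal{H}(S)$, then $\mathbf{w}+\mathbf{b}\in S$ (since $\mathbf{b}\in S\setminus\{0\}$ and $\mathbf{w}+(S\setminus\{0\})\subseteq S$), while $(\mathbf{w}+\mathbf{b})-\mathbf{b}=\mathbf{w}\in\mathrm{pos}(S)\setminus S$; by Definition \ref{Def Apery}, $\mathbf{w}+\mathbf{b}\in\mathrm{Ap}(S,\mathbf{b})$. For maximality, suppose $\mathbf{a}\in\mathrm{Ap}(S,\mathbf{b})$ with $\mathbf{w}+\mathbf{b}\preceq_S\mathbf{a}$ and $\mathbf{a}\ne\mathbf{w}+\mathbf{b}$; then $\mathbf{a}-\mathbf{w}-\mathbf{b}\in S\setminus\{0\}$, so the pseudo-Frobenius property gives $\mathbf{a}-\mathbf{b}=\mathbf{w}+(\mathbf{a}-\mathbf{w}-\mathbf{b})\in S$, contradicting $\mathbf{a}-\mathbf{b}\in\mathrm{pos}(S)\setminus S$.

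For (ii): set $\mathbf{w}:=\mathbf{a}-\mathbf{b}$. By definition of the Ap\'ery set $\mathbf{w}\in\mathrm{pos}(S)\setminus S$; since $\mathbf{w}\in\mathbb{Z}^d$ and $\mathrm{pos}(S)\subseteq\mathbb{Q}^d_{\geq 0}$ (because $S\subseteq\mathbb{N}^d$), we get $\mathbf{w}\in\mathbb{N}^d$, hence $\mathbf{w}\in\mathcal{H}(S)$. Now take any $\mathbf{s}\in S\setminus\{0\}$ and suppose $\mathbf{w}+\mathbf{s}\notin S$; then $\mathbf{w}+\mathbf{s}\in\mathrm{pos}(S)\setminus S$ (the cone is closed under addition), so $\mathbf{a}+\mathbf{s}\in S$ satisfies $(\mathbf{a}+\mathbf{s})-\mathbf{b}=\mathbf{w}+\mathbf{s}\in\mathrm{pos}(S)\setminus S$, i.e. $\mathbf{a}+\mathbf{s}\in\mathrm{Ap}(S,\mathbf{b})$, while $\mathbf{a}\preceq_S\mathbf{a}+\mathbf{s}$ and $\mathbf{a}+\mathbf{s}\ne\mathbf{a}$ — contradicting maximality of $\mathbf{a}$. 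Hence $\mathbf{w}+(S\setminus\{0\})\subseteq S$ and $\mathbf{w}\in\mathrm{PF}(S)$.

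I do not expect a genuine obstacle: this is a faithful generalization of the proof of \cite[Proposition 2.20]{ns-book} for numerical semigroups. The two points that need a little care are the inclusion $\mathrm{pos}(S)\subseteq\mathbb{Q}^d_{\geq 0}$ (needed to land $\mathbf{w}=\mathbf{a}-\mathbf{b}$ back in $\mathbb{N}^d$, not merely $\mathbb{Z}^d$) and the fact that $S$ here need not be a $\mathcal{C}$-semigroup nor even finitely generated, so one must avoid invoking finiteness of $\mathrm{Ap}(S,\mathbf{b})$; arguing through the explicit translation bijection above circumvents both.
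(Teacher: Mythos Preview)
Your proof is correct and follows essentially the same approach as the paper: both arguments establish the two directions (i) $\mathbf{w}\in\mathrm{PF}(S)\Rightarrow\mathbf{w}+\mathbf{b}\in\mathrm{maximals}_{\preceq_S}\mathrm{Ap}(S,\mathbf{b})$ and (ii) $\mathbf{a}\in\mathrm{maximals}_{\preceq_S}\mathrm{Ap}(S,\mathbf{b})\Rightarrow\mathbf{a}-\mathbf{b}\in\mathrm{PF}(S)$ via the same contradiction steps. Your version is slightly more careful than the paper's in two places: you explicitly verify $\mathbf{a}-\mathbf{b}\in\mathbb{N}^d$ (hence $\mathcal{H}(S)$) using $\mathrm{pos}(S)\subseteq\mathbb{Q}^d_{\geq 0}$, and in (ii) you work with an arbitrary $\mathbf{s}\in S\setminus\{0\}$ rather than a generator $\mathbf{a}_i$, which avoids the tacit finite-generation assumption present in the paper's proof.
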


\begin{proof}
Suppose that there exists $\mathbf{b}' \in \mathrm{PF}(S)$ and let $\mathbf{a} = \mathbf{b}' + \mathbf{b}$. Clearly $\mathbf{a} \in \mathrm{Ap}(S,\mathbf{b})$, and we claim that $\mathbf{a} \in \mathrm{maximals}_{\preceq_S} \mathrm{Ap}(S,\mathbf{b})$. Otherwise, there exists $\mathbf{a}' \in
\mathrm{Ap}(S,\mathbf{b})$ such that $\mathbf{a'} - \mathbf{a} \in S$, then \[\mathbf{a'} - \mathbf{b} = (\mathbf{a'} - \mathbf{b}) + \mathbf{b}' - \mathbf{b}'= \mathbf{b}' + (\mathbf{a'} - (\mathbf{b}' + \mathbf{b})) = \mathbf{b}' + (\mathbf{a'} - \mathbf{a}) \in S,\] which contradicts the definition of Ap\'ery set of $S$ relative to $\mathbf{b}$. Therefore, $\mathbf{b}' = \mathbf{a} - \mathbf{b}$ with $\mathbf{a} \in \mathrm{maximals}_{\preceq_S} \mathrm{Ap}(S,\mathbf{b})$. Consider now $\mathbf{a}'' \in \mathrm{maximals}_{\preceq_S} \mathrm{Ap}(S,\mathbf{b})$ and let $\mathbf{b}'' = \mathbf{a}''-\mathbf{b}$. If $\mathbf{b}'' \not\in \mathrm{PF}(S),$ then $\mathbf{b}'' + \mathbf{a}_i \not\in S$, for some $i \in \{1, \ldots, n\}$, that is to say, $\mathbf{a}''+\mathbf{a}_i \in \mathrm{Ap}(S,\mathbf{b})$ which is not possible by the maximality of $\mathbf{a}''$ in $\mathrm{Ap}(S,\mathbf{b})$ with respect to $\preceq_S$.
\end{proof}

Observe that \eqref{ecu1} holds for every MPD-semigroup.

We end this section by proving that MPD-semigroups are stable by gluing. First of all, let us recall the notion of gluing of affine semigroups.

Given an affine semigroup $S\subseteq \mathbb N^d$, denote by $G(S)$ the group spanned by $S$, that is,
\[G(S)=\big\{ \mathbf z \in \mathbb Z^m \mid \mathbf z= \mathbf a - \mathbf b, \mathbf a, \mathbf b\in S \big\}.\]

Assume that $S$ is finitely generated. Let $\mathcal{A}$ be the minimal generating system of $S$ and $\mathcal A= \mathcal A_1\cup \mathcal A_2$ be a nontrivial partition of $\mathcal A$. Let $S_i$ be the submonoid of $\mathbb{N}^d$ generated by $A_i,\ i\in \{1,2\}$. Then $S=S_1+S_2$. We say that $S$ is the \textbf{gluing} of $S_1$ and $S_2$ by $\mathbf d$ if
\begin{itemize}
\item $\mathbf d\in S_1\cap S_2$ and,
\item $G(S_1)\cap G(S_2) = \mathbf d\mathbb Z$.
\end{itemize}
We will denote this fact by $S=S_1+_{\mathbf d} S_2$.

\begin{theorem}\label{frob-gluing}
Let $S$ be an finitely generated submonoid of $\mathbb{N}^d$.
Assume that $S=S_1+_\mathbf d S_2$. If $S_1$ and $S_2$ are MPD-semigroups, and $\mathbf{b}_i \in \mathrm{PF}(S_i),\ i = 1,2,$ then $\mathbf b_1 + \mathbf b_2 + \mathbf d \in \mathrm{PF}(S)$. In particular, $S$ is a MPD-semigroup.
\end{theorem}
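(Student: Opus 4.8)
The plan is to show directly that $\mathbf{f} := \mathbf{b}_1 + \mathbf{b}_2 + \mathbf{d}$ belongs to $\mathrm{PF}(S)$; the final assertion that $S$ is an MPD-semigroup is then immediate from Theorem \ref{Th1}. I would first record two preliminary observations to be used throughout. Since $\mathbf{b}_i \in \mathrm{PF}(S_i) \subseteq \mathcal{H}(S_i)$, we have $\mathbf{b}_i \in \mathbb{N}^d$ and $\mathbf{b}_i \in \mathrm{pos}(S_i) \subseteq \mathrm{pos}(S)$; moreover, picking any minimal generator $\mathbf{a} \in \mathcal{A}_i$ of $S_i$ we get $\mathbf{b}_i + \mathbf{a} \in S_i$, so $\mathbf{b}_i = (\mathbf{b}_i + \mathbf{a}) - \mathbf{a} \in G(S_i)$. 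Also, since $\mathbf{d} \in (S_1 \cap S_2) \setminus \{0\}$ and $\mathbf{b}_i \in \mathrm{PF}(S_i)$, both $\mathbf{b}_1 + \mathbf{d} \in S_1$ and $\mathbf{b}_2 + \mathbf{d} \in S_2$.

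First I would check that $\mathbf{f} \in \mathcal{H}(S)$. The containments $\mathbf{f} \in \mathbb{N}^d$ and $\mathbf{f} \in \mathrm{pos}(S)$ are clear from the observations above, as $\mathrm{pos}(S)$ is a cone closed under sums containing $\mathbf{b}_1, \mathbf{b}_2$ and $\mathbf{d}$. The substantive point is $\mathbf{f} \notin S$. I would argue by contradiction: if $\mathbf{f} = \mathbf{s}_1 + \mathbf{s}_2$ with $\mathbf{s}_i \in S_i$, then comparing with the expression $\mathbf{f} = (\mathbf{b}_1 + \mathbf{d}) + \mathbf{b}_2$ — whose first summand lies in $G(S_1)$ and whose second lies in $G(S_2)$ — yields $\mathbf{s}_1 - (\mathbf{b}_1 + \mathbf{d}) = \mathbf{b}_2 - \mathbf{s}_2 \in G(S_1) \cap G(S_2) = \mathbf{d}\mathbb{Z}$. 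Hence $\mathbf{s}_1 = \mathbf{b}_1 + (k+1)\mathbf{d}$ and $\mathbf{s}_2 = \mathbf{b}_2 - k\mathbf{d}$ for some $k \in \mathbb{Z}$. If $k \geq 0$, then $\mathbf{b}_2 = \mathbf{s}_2 + k\mathbf{d} \in S_2$, contradicting $\mathbf{b}_2 \notin S_2$; if $k \leq -1$, then $-(k+1) \geq 0$ and $\mathbf{b}_1 = \mathbf{s}_1 + (-(k+1))\mathbf{d} \in S_1$, contradicting $\mathbf{b}_1 \notin S_1$. So $\mathbf{f} \notin S$.

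Next I would verify $\mathbf{f} + S \setminus \{0\} \subseteq S$. Since $S$ is generated by $\mathcal{A}_1 \cup \mathcal{A}_2$, every $\mathbf{s} \in S \setminus \{0\}$ can be written $\mathbf{s} = \mathbf{a} + \mathbf{s}'$ with $\mathbf{a} \in \mathcal{A}_1 \cup \mathcal{A}_2$ and $\mathbf{s}' \in S$, so $\mathbf{f} + \mathbf{s} = (\mathbf{f} + \mathbf{a}) + \mathbf{s}'$ and it suffices to prove $\mathbf{f} + \mathbf{a} \in S$ for each generator $\mathbf{a}$. If $\mathbf{a} \in \mathcal{A}_1$, write $\mathbf{f} + \mathbf{a} = (\mathbf{b}_1 + \mathbf{a}) + (\mathbf{b}_2 + \mathbf{d})$, where $\mathbf{b}_1 + \mathbf{a} \in S_1$ (because $\mathbf{a} \in S_1 \setminus \{0\}$) and $\mathbf{b}_2 + \mathbf{d} \in S_2$, so $\mathbf{f} + \mathbf{a} \in S_1 + S_2 = S$; the case $\mathbf{a} \in \mathcal{A}_2$ is symmetric, using $\mathbf{f} + \mathbf{a} = (\mathbf{b}_1 + \mathbf{d}) + (\mathbf{b}_2 + \mathbf{a})$. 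This gives $\mathbf{f} \in \mathrm{PF}(S)$, and then $S$ is an MPD-semigroup by Theorem \ref{Th1}.

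The hard part will be the non-membership $\mathbf{f} \notin S$: it is the only step that uses the group-theoretic condition $G(S_1) \cap G(S_2) = \mathbf{d}\mathbb{Z}$ in an essential way, and one has to be careful to play the hypothetical decomposition $\mathbf{f} = \mathbf{s}_1 + \mathbf{s}_2$ against the one built from the $\mathbf{b}_i$ inside the groups $G(S_i)$, and then to rule out both signs of the integer $k$. The remaining steps are straightforward manipulations of the pseudo-Frobenius property in $S_1$ and $S_2$ together with $S = S_1 + S_2$, and the concluding MPD statement is an immediate application of Theorem \ref{Th1}.
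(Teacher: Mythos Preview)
Your proof is correct and follows essentially the same route as the paper's: both establish $\mathbf{f}\in\mathrm{pos}(S)$, then prove $\mathbf{f}\notin S$ by the same contradiction via $G(S_1)\cap G(S_2)=\mathbf{d}\mathbb{Z}$ and a sign analysis on the resulting integer multiple of $\mathbf{d}$, and finally check $\mathbf{f}+S\setminus\{0\}\subseteq S$ using the pseudo-Frobenius property in each factor together with $\mathbf{d}\in S_1\cap S_2$. The only cosmetic difference is that you reduce the last step to generators in $\mathcal{A}_1\cup\mathcal{A}_2$, whereas the paper decomposes an arbitrary nonzero element of $S$ as a sum from $S_1$ and $S_2$; your version actually sidesteps a small case distinction (when one summand is zero) that the paper leaves implicit.
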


\begin{proof}
Let $\mathbf{b} := \mathbf b_1 + \mathbf b_2 + \mathbf d$. Since $\mathbf{b}_i \in \mathrm{pos}(S_i),\ i = 1,2,$ and $\mathbf{d} \in S_1 \cap S_2,$ we conclude that $\mathbf{b} \in \mathrm{pos}(S)$. If $\mathbf{b} \in S$, then there exist $\mathbf{b}'_i \in S_i,\ i = 1,2,$ such that $\mathbf{b} = \mathbf{b}'_1 + \mathbf{b}'_2$. Then $\mathbf{b}_1 + \mathbf{d} - \mathbf{b}'_1 = \mathbf{b}'_2 - \mathbf{b}_2 \in G(S_1)\cap G(S_2) = \mathbf d\mathbb Z$. So, there exist $k \in \mathbb{Z}$ such that $\mathbf{b}_1 + \mathbf{d} - \mathbf{b}'_1 = \mathbf{b}'_2 - \mathbf{b}_2 = k \mathbf d$. If $k \leq 0,$ then $\mathbf{b}_2 = \mathbf{b}'_2 - k \mathbf d \in S_2$, which is impossible. If $k > 0,$ then $\mathbf{b}_1 = \mathbf{b}'_1 + (k-1) \mathbf d \in S_1$, which is also impossible. All this prove that $\mathbf{b} \in \mathcal{H}(S)$.

Now, let $\mathbf{a} \in S \setminus \{0\}$. Again there exist $\mathbf{b}'_i \in S_i,\ i = 1,2,$ such that $\mathbf{a} = \mathbf{b}'_1 + \mathbf{b}'_2$. Since $\mathbf d \in S_1 \cap S_2 \subset S$. We have that $\mathbf{b}_1 + \mathbf{b}'_1 \in S_1$ and $\mathbf{b}_2 + \mathbf{b}'_2 + \mathbf{d} \in S_2$. Thus $\mathbf{b} + \mathbf{a} \in S$, and we are done.
\end{proof}

\begin{example}
Let $S_1=\{(x,y,z)\in\mathbb N^3\mid z=0\}\setminus\{(1,0,0)\}$ and $S_2=\{(x,y,z)\in\mathbb N^3\mid x=y\}\setminus\{(0,0,1)\}$. Clearly, $(1,0,0)\in \mathrm{PF}(S_1)$ and $(0,0,1)\in \mathrm{PF}(S_2)$. They are minimally generated by $\{ (2,0,0),(3,0,0),(0,1,0),(1,1,0)\}$  and $\{ (1,1,0),(1,1,1),(0,0,2),(0,0,3)\}$, respectively. The set $G(S_1)\cap G(S_2)$ is equal to $(1,1,0)\mathbb{Z}$ and $S_1+S_2$ is generated by \[\{(2,0,0),(3,0,0),(0,1,0),(1,1,0),(1,1,1),(0,0,2),(0,0,3)\}\] By Theorem \ref{frob-gluing},  $(1,0,0)+(0,0,1)+(1,1,0)=(2,1,1)$ belongs to $\mathrm{PF}(S_1+S_2) $.
\end{example}

\section{On the irreducibility of MPD-semigroups}\label{Sect Irr}

Now, let us study the irreducibility of MPD-semigroups with special emphasis in the $\mathcal C-$semigroups case. Recall that a submonoid of $\mathbb{N}^d$ is \textbf{irreducible} if cannot be expressed as an intersection of two submonoids of $\mathbb{N}^n$ containing it properly.

\begin{lemma}\label{LemaIrr}
Let $\mathbf a \in \mathrm{PF}(S)$. If $2\, \mathbf{a} \in S$, then $S \cup \{\mathbf{a}\}$ is the submonoid of $\mathbb{N}^d$ generated by $\mathcal{A} \cup \{\mathbf{a}\}$. Moreover,
\begin{enumerate}[(a)]
\item if $\mathbf{a} \in \mathrm{F}(S)$ and $\mathrm{PF}(S) \neq \{\mathbf a\}$ then $S \cup \{\mathbf{a}\}$ is a MPD-semigroup;
\item if $S$ is a $\mathcal{C}-$semigroup, then $S \cup \{\mathbf{a}\}$ is a $\mathcal{C}-$semigroup.
\end{enumerate}
\end{lemma}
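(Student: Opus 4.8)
The plan is to first establish the structural claim that $T := S \cup \{\mathbf a\}$ is a submonoid, and indeed equals $\langle \mathcal A \cup \{\mathbf a\}\rangle$; then to handle (a) and (b) separately. For the structural part, I would check closure under addition: take $\mathbf s, \mathbf t \in T$. If both lie in $S$ we are done; if exactly one equals $\mathbf a$, say $\mathbf s = \mathbf a$ and $\mathbf t \in S$, then either $\mathbf t = 0$, giving $\mathbf a \in T$, or $\mathbf t \in S\setminus\{0\}$, and then $\mathbf a + \mathbf t \in S$ by the defining property of a pseudo-Frobenius element; if $\mathbf s = \mathbf t = \mathbf a$, then $\mathbf s + \mathbf t = 2\mathbf a \in S$ by hypothesis. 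So $T$ is a submonoid, and since $\mathbf a \notin S = \langle \mathcal A\rangle$, the set $\mathcal A \cup \{\mathbf a\}$ generates $T$ (no element of $\mathcal A$ becomes redundant, and $\mathbf a$ is not a sum of two nonzero elements of $T$, so it too is a minimal generator). Note $n$ has thereby increased by one for $T$.

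For part (b), I would argue that $\mathrm{pos}(T) = \mathrm{pos}(S)$: since $\mathbf a \in \mathcal H(S) \subseteq \mathrm{pos}(S)$, adjoining $\mathbf a$ does not enlarge the cone. Hence $\mathcal H(T) = (\mathrm{pos}(T)\setminus T)\cap \mathbb N^d = (\mathrm{pos}(S)\setminus S)\cap \mathbb N^d \setminus \{\mathbf a\} = \mathcal H(S)\setminus\{\mathbf a\}$, which is still finite; so $T$ is a $\mathcal C$-semigroup for the same cone $\mathcal C = \mathrm{pos}(S)$.

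For part (a), the goal is to exhibit a pseudo-Frobenius element of $T$, equivalently (by Theorem \ref{Th1}) to show $\mathrm{PF}(T) \neq \varnothing$. The natural candidate is any $\mathbf a' \in \mathrm{PF}(S)$ with $\mathbf a' \neq \mathbf a$ — such an element exists by the hypothesis $\mathrm{PF}(S) \neq \{\mathbf a\}$. I would verify: (i) $\mathbf a' \in \mathcal H(T)$, i.e. $\mathbf a' \notin T$ — this uses that $\mathbf a = \mathrm{F}(S) = \max_\prec \mathcal H(S)$ for some term order $\prec$, so $\mathbf a' \prec \mathbf a$ forces $\mathbf a' \neq \mathbf a$, hence $\mathbf a' \notin S \cup \{\mathbf a\} = T$, while $\mathbf a' \in \mathrm{pos}(S) = \mathrm{pos}(T)$; (ii) $\mathbf a' + T\setminus\{0\} \subseteq T$: for $\mathbf t \in S\setminus\{0\}$ we have $\mathbf a' + \mathbf t \in S \subseteq T$ since $\mathbf a' \in \mathrm{PF}(S)$, and it remains to check $\mathbf a' + \mathbf a \in T$. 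Here is where maximality of $\mathbf a$ is essential: since $\mathbf a$ is a Frobenius element and $\mathbf a' + \mathbf a \succ \mathbf a = \max_\prec \mathcal H(S)$ (as $\mathbf a'\succ 0$, noting $\mathbf a'\neq 0$ because $0\notin\mathcal H(S)$), we get $\mathbf a' + \mathbf a \notin \mathcal H(S)$; but $\mathbf a' + \mathbf a \in \mathrm{pos}(S)\cap\mathbb N^d$, so $\mathbf a' + \mathbf a \in S \subseteq T$. Thus $\mathbf a' \in \mathrm{PF}(T)$ and $T$ is a MPD-semigroup.

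The main obstacle I anticipate is part (a): one must be careful that the element witnessing $\mathrm{PF}(T)\neq\varnothing$ does not fall into $T$ after adjoining $\mathbf a$, and the clean way to control this is precisely the hypothesis that $\mathbf a$ is a \emph{Frobenius} (not merely pseudo-Frobenius) element, which gives a term order making $\mathbf a$ maximal in $\mathcal H(S)$; all the other pseudo-Frobenius elements then sit strictly below $\mathbf a$ and, crucially, adding $\mathbf a$ to such an element lands above $\mathbf a$ and hence necessarily in $S$. A secondary point to get right is that $\mathbf a' \neq 0$, which is immediate since $\mathrm{PF}(S)\subseteq\mathcal H(S)$ and $0\in S$.
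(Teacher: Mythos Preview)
Your proof is correct and follows essentially the same route as the paper: case analysis for closure under addition, then for (a) pick $\mathbf a'\in\mathrm{PF}(S)\setminus\{\mathbf a\}$ and use the term-order maximality of $\mathbf a$ to force $\mathbf a'+\mathbf a\in S$, and for (b) observe $\mathrm{pos}(T)=\mathrm{pos}(S)$ so $\mathcal H(T)\subseteq\mathcal H(S)$.

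One small correction to a side remark: your parenthetical claim that ``no element of $\mathcal A$ becomes redundant'' (and hence that the number of minimal generators increases by one) is not generally true. For instance, take the numerical semigroup $S=\langle 6,7,8,9,10,11\rangle$; then $3\in\mathrm{PF}(S)$ and $2\cdot 3=6\in S$, but in $T=S\cup\{3\}$ both $6=3+3$ and $9=3+3+3$ become redundant. This does not affect the lemma, which only asserts that $\mathcal A\cup\{\mathbf a\}$ \emph{generates} $T$, not minimally; just drop the parenthetical.
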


\begin{proof}
By definition, $\mathbf{a} + \mathbf{b} \in S \subset S \cup \{\mathbf{a}\} $, for every $\mathbf{b} \in S$, and by hypothesis $2\, \mathbf{a} \in S$, so $k\, \mathbf{a} \in S$ for every $k \in \mathbb{N}$; thus, $S \cup \{\mathbf{a}\}$ is the submonoid of $\mathbb{N}^d$ generated by $\mathcal{A} \cup \{\mathbf{a}\}$.

Suppose now that $\mathbf{a}$ is a Frobenius element of $S$ and that $\mathrm{PF}(S) \neq \{\mathbf a\}$. Let $\mathbf{b} \in \mathrm{PF}(S) \setminus \{\mathbf{a}\}$. Clearly, $\mathbf{a} + \mathbf{b} \in S$, because $\mathbf{a} + \mathbf{b}$ is greater than $\mathbf{a}$ for every term order on $\mathbb{N}^d$, therefore $\mathbf{b} + (S \cup \{\mathbf a\}) \setminus \{0\} \subset S \subset S \cup \{\mathbf a\}$ and we conclude that $\mathrm{PF}(S \cup \{\mathbf a\}) \neq \varnothing$ which proves (a).

Finally, since $\mathbf a \in \mathcal{H}(S)$, we have that $\mathrm{pos}(S \cup \{\mathbf{a}\}) = \mathrm{pos}(S)$. Therefore $\mathcal{H}(S \cup \{\mathbf{a}\}) \subset \mathcal{H}(S)$, that is, $S \cup \{\mathbf{a}\}$ is a $\mathcal{C}-$semigroup if $S$ it so as claimed in (b).
\end{proof}

\begin{proposition}\label{PropIrr?}
If $S$ has a Frobenius element, $\mathbf{f}$, and is irreducible then
\begin{enumerate}[(a)]
\item $S$ is maximal among all the submonoids of $\mathbb{N}^d$ having $\mathbf{f}$ as a Frobenius element.
\item $S$ has an unique Frobenius element.
\end{enumerate}
\end{proposition}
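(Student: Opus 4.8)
The plan is to prove both parts by exploiting the maximality characterization of the Frobenius element together with the standard description of irreducible monoids as those that are "maximal with respect to not containing some element." First I would record the key tension: by hypothesis there is a term order $\prec$ with $\mathbf{f} = \max_\prec \mathcal{H}(S)$, so for every $\mathbf{a} \in \mathcal{H}(S)$ we have $\mathbf{a} \preceq \mathbf{f}$; in particular, by Lemma \ref{lemma1}, $\mathbf{f} \in \mathrm{PF}(S)$, and $2\mathbf{f} \succ \mathbf{f}$ forces $2\mathbf{f} \in S$. This is exactly the situation of Lemma \ref{LemaIrr}, which tells us $S \cup \{\mathbf{f}\}$ is again a submonoid of $\mathbb{N}^d$, generated by $\mathcal{A} \cup \{\mathbf{f}\}$, and with the same cone, hence with $\mathcal{H}(S \cup \{\mathbf{f}\}) = \mathcal{H}(S) \setminus \{\mathbf{f}\}$.

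For part (a), suppose $T$ is a submonoid of $\mathbb{N}^d$ with $S \subseteq T$ and $\mathbf{f} \in \mathrm{F}(T)$; I want $T = S$. Since $\mathbf{f} \in \mathcal{H}(T)$ we have $\mathbf{f} \notin T$, so $S \subseteq T \subseteq \big(S \cup \{\mathbf{f}\}\big) \cup (\text{stuff outside } S \cup \{\mathbf{f}\})$ — more carefully, $T$ does not contain $\mathbf{f}$, so $T \subseteq \mathbb{N}^d \setminus \{\mathbf{f}\}$, and I would show $T \subseteq S$ by contradiction: if $\mathbf{g} \in T \setminus S$, then $\mathbf{g} \in \mathrm{pos}(T) \cap \mathbb{N}^d$. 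Here I need $\mathrm{pos}(T)$; since $S \subseteq T$ and both sit between $S$ and $\mathbb{N}^d$ while $\mathbf{f} \in \mathrm{F}(T)$ means $\mathcal H(T)$ has a maximum, I would argue $\mathrm{pos}(T) = \mathrm{pos}(S)$ (any generator of $T$ outside $\mathrm{pos}(S)$ would produce, by scaling and adding elements of $S$, arbitrarily large elements of $\mathcal H(T)$ in a direction outside $\mathrm{pos}(S)$, contradicting the existence of $\max_{\prec'}\mathcal H(T)$ for the order $\prec'$ witnessing $\mathbf f \in \mathrm F(T)$ — this needs the compatibility of a term order with the cone structure, and is the delicate point). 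Granting $\mathrm{pos}(T) = \mathrm{pos}(S)$, we get $\mathbf{g} \in \mathcal{H}(S)$, hence $\mathbf{g} \preceq' \mathbf{f}$; but then since $\mathbf f\in\mathrm{PF}(T)$ (Lemma \ref{lemma1} applied in $T$) and... rather, use that $\mathbf f=\max_{\prec'}\mathcal H(T)$ and $\mathbf g\in T$ forces $\mathbf g\notin\mathcal H(T)$, so $\mathbf g\in S$ after all — wait, $\mathbf g\in T\setminus S$ with $\mathbf g\in\mathrm{pos}(T)\setminus T$? No: $\mathbf g\in T$ so $\mathbf g\notin\mathcal H(T)$; but $\mathbf g\in\mathrm{pos}(S)=\mathrm{pos}(T)$ and $\mathbf g\notin S$ gives $\mathbf g\in\mathcal H(S)$. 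The resolution: consider $\mathbf g + \mathbf f$. It lies in $T$ (since $\mathbf f\in\mathrm{PF}(T)$ and $\mathbf g\ne 0$), and $\mathbf g+\mathbf f\succ'\mathbf f$. If $\mathbf g+\mathbf f\in\mathcal H(T)$ we contradict maximality, so $\mathbf g+\mathbf f\in T$ always — no contradiction yet. Instead I would use irreducibility of $S$ directly: $S = (S\cup\{\mathbf g'\})\cap(\cdots)$ type argument, or simpler, invoke that $T\subseteq S\cup\{\mathbf f\}$ is impossible-to-strictly-exceed combined with $\mathbf f\notin T$. The cleanest route: show $S\cup\{\mathbf f\}\not\subseteq T$ is forced, so $T\subseteq S$, using that any element strictly between would have to be $\mathbf f$.

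For part (b), suppose $\mathbf{f}'$ is another Frobenius element, $\mathbf{f}' \neq \mathbf{f}$, with witnessing order $\prec'$. By Lemma \ref{lemma1} both $\mathbf{f}, \mathbf{f}' \in \mathrm{PF}(S)$, so $\mathrm{PF}(S) \neq \{\mathbf{f}\}$; since $\mathbf f\in\mathrm F(S)$ and $2\mathbf f\in S$, Lemma \ref{LemaIrr}(a) gives that $S \cup \{\mathbf{f}\}$ is a MPD-semigroup, in particular $S \subsetneq S \cup \{\mathbf{f}\}$ properly. Symmetrically $S \subsetneq S \cup \{\mathbf{f}'\}$ (note $2\mathbf f'\in S$ as well, and $\mathbf f\in\mathrm{PF}(S)\setminus\{\mathbf f'\}$). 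Now $S \cup \{\mathbf f\}$ and $S \cup \{\mathbf f'\}$ are two submonoids properly containing $S$, and their intersection is $S$ (since $\mathbf f\ne\mathbf f'$); this contradicts irreducibility of $S$. Hence $\mathbf{f}$ is the unique Frobenius element. The main obstacle I anticipate is part (a): pinning down that an enlargement $T$ with the same prescribed Frobenius element cannot escape the cone of $S$, i.e. justifying $\mathrm{pos}(T)=\mathrm{pos}(S)$ from the mere existence of a $\prec'$-maximum of $\mathcal H(T)$; once the cones agree, the maximality of $\mathbf f$ pins $\mathcal H(T)\subseteq\mathcal H(S)$ and a short argument with $\mathbf f\in\mathrm{PF}(T)$ finishes it.
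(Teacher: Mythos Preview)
Your argument for (b) is correct and is exactly the paper's: both $S\cup\{\mathbf f\}$ and $S\cup\{\mathbf f'\}$ are submonoids properly containing $S$ (via Lemma~\ref{LemaIrr}, since $2\mathbf f,2\mathbf f'\in S$), their intersection is $S$, and irreducibility is violated.

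For (a), however, you miss that the \emph{same} intersection trick finishes immediately. Given a submonoid $T\supseteq S$ with $\mathbf f\in\mathrm F(T)$, you have $\mathbf f\notin T$, hence
\[
(S\cup\{\mathbf f\})\cap T = S
\]
(any element of the left side lies in $S\cup\{\mathbf f\}$, so is either in $S$ or equals $\mathbf f$; the latter is impossible since $\mathbf f\notin T$). You already checked $2\mathbf f\in S$, so $S\cup\{\mathbf f\}$ is a submonoid properly containing $S$. Irreducibility then forces $T=S$. This is the paper's proof, and it is literally the same move you made in (b), with $T$ playing the role of $S\cup\{\mathbf f'\}$.

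Your attempted direct route --- chasing an element $\mathbf g\in T\setminus S$ and worrying about whether $\mathrm{pos}(T)=\mathrm{pos}(S)$ --- is a genuine detour. The cone comparison is irrelevant once the intersection is written down, and your element-chasing never reaches a contradiction because you never actually invoke the irreducibility hypothesis (you mention ``use irreducibility of $S$ directly'' but then propose a decomposition $S=(S\cup\{\mathbf g'\})\cap(\cdots)$ with an unspecified second factor, and your final suggestion ``$T\subseteq S\cup\{\mathbf f\}$'' is neither what you need nor obviously true). Without irreducibility there is no reason $T$ must equal $S$; maximality among submonoids with a prescribed Frobenius element is precisely what irreducibility delivers, and only via the intersection argument.
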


\begin{proof}
Suppose that $S$ is irreducible and let $S'$ be a submonoid of $\mathbb{N}^d$ having $\mathbf{f}$ as Frobenius element. Since, by Lemma \ref{LemaIrr}, $S \cup \{\mathbf{f}\}$ is a MPD-semigroup and $S = (S \cup \{\mathbf{f}\}) \cap S'$, we conclude that $S = S'$. Finally, if $S$ has two Frobenius elements, say $\mathbf{f}_1$ and $\mathbf{f}_2$, then $S = (S \cup \{\mathbf{f}_1\}) \cap (S \cup \{\mathbf{f}_2\})$ which contradicts the irreducibility of $S$.
\end{proof}

Notice that the submonoids in condition (a) are necessarily MPD-semi\-groups; indeed the existence of a Frobenius elements in a submonoid of $\mathbb{N}^d$ implies that the submonoid is a MPD-semigroup by Lemma \ref{lemma1}.

\begin{theorem}\label{Th Irr}
If $S$ has a Frobenius element, $\mathbf{f}$, and is irreducible, then either $\mathrm{PF}(S) = \{\mathbf{f}\}$ or $\mathrm{PF}(S) = \{\mathbf{f}, \mathbf{f}/2\}$.
\end{theorem}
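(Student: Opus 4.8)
The plan is to exploit Proposition~\ref{PropIrr?}: since $S$ is irreducible with Frobenius element $\mathbf{f}$, that proposition gives that $\mathbf{f}$ is the \emph{unique} Frobenius element of $S$ and, more importantly, that $S$ is maximal (for inclusion) among the submonoids of $\mathbb{N}^d$ having $\mathbf{f}$ as a Frobenius element; also $\mathbf{f}\in\mathrm{PF}(S)$ by Lemma~\ref{lemma1}. Since $\mathcal{H}(S)$ contains no zero vector, $\mathbf{f}/2\in\mathbb{N}^d$ as soon as $\mathbf{f}=2\mathbf{h}$ for some $\mathbf{h}$; hence it suffices to prove that every $\mathbf{h}\in\mathrm{PF}(S)$ with $\mathbf{h}\neq\mathbf{f}$ satisfies $2\mathbf{h}=\mathbf{f}$, which together with $\mathbf{f}\in\mathrm{PF}(S)$ forces $\mathrm{PF}(S)=\{\mathbf{f}\}$ or $\mathrm{PF}(S)=\{\mathbf{f},\mathbf{f}/2\}$.

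The technical engine, to be invoked twice, is the following. If $\mathbf{g}\in\mathrm{PF}(S)$ then, by the definition of pseudo-Frobenius element and a one-line induction, $\ell\,\mathbf{g}+\mathbf{s}\in S$ for every integer $\ell\ge 1$ and every $\mathbf{s}\in S\setminus\{0\}$; consequently the submonoid $T_{\mathbf{g}}$ of $\mathbb{N}^d$ generated by $\mathcal{A}\cup\{\mathbf{g}\}$ equals $S\cup\{\ell\,\mathbf{g}\mid \ell\ge 1\}$. As $\mathbf{g}\in\mathcal{H}(S)\subseteq\mathrm{pos}(S)$, we get $\mathrm{pos}(T_{\mathbf{g}})=\mathrm{pos}(S)$, whence $\mathcal{H}(T_{\mathbf{g}})=\mathcal{H}(S)\setminus\{\ell\,\mathbf{g}\mid \ell\ge 1\}$ and $T_{\mathbf{g}}\supsetneq S$. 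If, in addition, $\mathbf{g}\neq\mathbf{f}$ and $\mathbf{f}$ were not a positive multiple of $\mathbf{g}$, then $\mathbf{f}\in\mathcal{H}(T_{\mathbf{g}})$; and since $\mathbf{f}=\max_{\prec}\mathcal{H}(S)$ while $\mathcal{H}(T_{\mathbf{g}})\subseteq\mathcal{H}(S)$, also $\mathbf{f}=\max_{\prec}\mathcal{H}(T_{\mathbf{g}})$, so $\mathbf{f}\in\mathrm{F}(T_{\mathbf{g}})$, contradicting the maximality of $S$. Therefore every $\mathbf{g}\in\mathrm{PF}(S)\setminus\{\mathbf{f}\}$ satisfies $\mathbf{f}=k\,\mathbf{g}$ for some integer $k\ge 2$.

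Applying this to $\mathbf{g}=\mathbf{h}$ gives $\mathbf{f}=k\,\mathbf{h}$ with $k\ge 2$; if $k=2$ we are done, so assume $k\ge 3$. Next I would check that $(k-1)\mathbf{h}\in\mathrm{PF}(S)\setminus\{\mathbf{f}\}$: it lies in $\mathrm{pos}(S)$; it is not in $S$, since $\mathbf{h}\in\mathrm{PF}(S)$ would otherwise force $\mathbf{f}=k\,\mathbf{h}=\mathbf{h}+(k-1)\mathbf{h}\in S$; it satisfies $(k-1)\mathbf{h}+\mathbf{s}=\mathbf{h}+\bigl((k-2)\mathbf{h}+\mathbf{s}\bigr)\in S$ for every $\mathbf{s}\in S\setminus\{0\}$ by the same induction; and $(k-1)\mathbf{h}\neq k\,\mathbf{h}=\mathbf{f}$ because $\mathbf{h}\neq 0$. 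Feeding $\mathbf{g}=(k-1)\mathbf{h}$ into the engine yields $\mathbf{f}=m(k-1)\mathbf{h}$ for some $m\ge 2$; comparing with $\mathbf{f}=k\,\mathbf{h}$ and cancelling the nonzero vector $\mathbf{h}$ coordinatewise gives $k=m(k-1)$, hence $(k-1)\mid k$, hence $(k-1)\mid 1$, i.e.\ $k=2$, contradicting $k\ge 3$. Thus $k=2$ and $\mathbf{h}=\mathbf{f}/2$, which finishes the argument.

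The step I expect to require the most care is the maximality argument, precisely because a term order on $\mathbb{N}^d$ need not be compatible with subtraction: from $\mathbf{h}\preceq\mathbf{f}$ one cannot conclude $\mathbf{f}-\mathbf{h}\in\mathbb{N}^d$, so the classical numerical-semigroup move of asking whether $\mathbf{f}-\mathbf{h}$ is a gap is unavailable. Routing everything through the positive multiples $\ell\,\mathbf{g}$ of a pseudo-Frobenius element together with Proposition~\ref{PropIrr?}(a), and closing with the elementary divisibility $(k-1)\mid k\Rightarrow k=2$ in place of halving, is what makes the argument go. One should also verify carefully the bookkeeping that the submonoid generated by $\mathcal{A}\cup\{\mathbf{g}\}$ collapses to $S\cup\{\ell\,\mathbf{g}\mid\ell\ge 1\}$ and has the same cone as $S$, since this identification is used twice.
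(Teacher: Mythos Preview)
Your argument is correct and takes a genuinely different route from the paper's. The paper works directly with the irreducibility of $S$ and Lemma~\ref{LemaIrr}: given $\mathbf a\in\mathrm{PF}(S)\setminus\{\mathbf f\}$, it splits on whether $2\mathbf a\in S$; in the first case it writes $S=(S\cup\{\mathbf a\})\cap(S\cup\{\mathbf f\})$, in the second it argues that $2\mathbf a\in\mathrm{PF}(S)$ and then claims $2\mathbf a-\mathbf f\in\mathbb N^d$ to finish. You instead pass through Proposition~\ref{PropIrr?}(a) and build, for every $\mathbf g\in\mathrm{PF}(S)$, the overmonoid $T_{\mathbf g}=S\cup\{\ell\mathbf g:\ell\ge1\}$, observing that $\mathbf f$ remains a Frobenius element of $T_{\mathbf g}$ unless $\mathbf f$ is a positive multiple of $\mathbf g$; maximality then forces $\mathbf f=k\mathbf h$, and a second application to $(k-1)\mathbf h$ together with $(k-1)\mid k\Rightarrow k=2$ pins down $k$. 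This buys you two things: no case split on $2\mathbf a\in S$ (your description of $T_{\mathbf g}$ does not need $2\mathbf g\in S$, unlike Lemma~\ref{LemaIrr}), and, more importantly, you sidestep the delicate passage in the paper where a term-order inequality is converted into the coordinatewise relation $2\mathbf a-\mathbf f\in\mathbb N^d$; your divisibility trick replaces that subtraction entirely. The paper's approach is shorter once one grants that step and has Lemma~\ref{LemaIrr} at hand; yours is more self-contained and makes explicit why the order-theoretic subtlety you flag does not bite.
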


\begin{proof}
Suppose that $\mathrm{PF}(S) \neq \{\mathbf{f}\}$. Now, since $\mathrm{PF}(S)$ has cardinality greater than or equal to two, there exists $\mathbf{a} \in \mathrm{PF}(S)$ different from $\mathbf{f}$. If $2\mathbf{a} \in S$, then, by Lemma \ref{LemaIrr}, $S \cup \{\mathbf{a}\}$ are $S \cup \{\mathbf{f}\}$ are a submonoid of $\mathbb{N}^d$ whose intersection is $S$, in contradiction with irreducibility of $S$. Therefore, we may assume that $2\mathbf{a} \not\in S$ which implies $2 \mathbf{a} \in \mathrm{PF}(S)$. Then $\mathbf{f} + \mathbf{u} = 2 \mathbf{a}$ for some $\mathbf{u} \in \mathbb{N}^d$, because $\mathbf{f}$ is greater than or equal to $\mathbf{b}$ for every term order on $\mathbb{N}^d$. Notice that $4 \mathbf{a} = \mathbf{f} + (\mathbf{f} + 2 \mathbf{u} ) \in S$; so, by Lemma \ref{LemaIrr}, $\mathbf{S} \cup \{2 \mathbf{a}\}$ is a MPD-semigroup. Now, if $\mathbf{u} \neq 0,$ then $S = (\mathbf{S} \cup \{2 \mathbf{a}\}) \cap (\mathbf{S} \cup \{\mathbf{f}\}),$ in contradiction with irreducibility of $S$. Therefore $2 \mathbf{a} = \mathbf{f}$ and we are done.
\end{proof}

\begin{example}
Let $S$ be the MPD-semigroup of Example \ref{Ex Irr?}. Let us see that $S$ is irreducible. If $S$ is not irreducible, there exist two submonoids, $S_1$ and $S_2$,  of $\mathbb{N}^2$ such that $S = S_1 \cap S_2$. Since $\mathrm{pos}(S) = \mathbb{N}^2$, we have that $\mathrm{pos}(S_1) = \mathrm{pos}(S_2) = \mathbb{N}^2$ and it follows that $\mathcal{H}(S_i) \subseteq \mathcal{H}(S),\ i = 1,2$. On other hand, since $(7,2) \not\in S$, then $(7,2) \not\in S_1$ or $(7,2) \notin S_2$. Therefore, $S_1$ or $S_2$ is a submonoid of $\mathbb{N}^2$
such that $\mathbf{f} \in \mathrm{F}(S_1)$ or $\mathbf{f} \in \mathrm{F}(S_2)$, respectively. Now, by Proposition \ref{PropIrr?}, we conclude that $S = S_1$ or $S=S_2$, that is, $S$ is irreducible.
\end{example}

If $n=1$, the converse Theorem \ref{Th Irr} is also true (see \cite[Section 4.1]{ns-book}). Let us see that this is also happen if we fix the cone.

\begin{definition}
If $S$ is $\mathcal{C}-$semigroup, we say that $S$ is \textbf{$\mathcal{C}-$irreducible} if cannot be expressed as an intersection of two finitely generated submonoids $S_1$ and $S_2$ of $\mathbb{N}^d$ with $\mathrm{pos}(S_1) = \mathrm{pos}(S_2) = \mathrm{pos}(S)$ containing it properly.
\end{definition}

\begin{proposition}\label{Prop IrredC}
If $S$ is a $\mathcal{C}-$semigroup such that $\mathrm{PF}(S) = \{\mathbf{f}\}$ or
$\mathrm{PF}(S) = \{\mathbf{f}, \mathbf{f}/2\}$,
then $S$ is $\mathcal{C}-$irreducible.
\end{proposition}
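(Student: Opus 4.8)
The plan is to argue by contradiction. Suppose $S$ is not $\mathcal{C}$-irreducible, so that $S = S_1 \cap S_2$ for finitely generated submonoids $S_1, S_2$ of $\mathbb{N}^d$ with $\mathrm{pos}(S_1) = \mathrm{pos}(S_2) = \mathrm{pos}(S)$ and $S \subsetneq S_i$ for $i = 1,2$. The first step is to record that the equality of cones, together with $S \subseteq S_i$, gives $\mathcal{H}(S_i) = (\mathrm{pos}(S_i)\setminus S_i)\cap\mathbb{N}^d \subseteq (\mathrm{pos}(S)\setminus S)\cap\mathbb{N}^d = \mathcal{H}(S)$; in particular, since $S$ is a $\mathcal{C}$-semigroup, $\mathcal{H}(S)$ is finite, and consequently each set $S_i\setminus S$ is finite, being contained in $\mathcal{H}(S)$. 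Since $\mathbf{f}\in\mathrm{PF}(S)\subseteq\mathcal{H}(S)$ we have $\mathbf{f}\notin S = S_1\cap S_2$, so we may assume, relabelling if necessary, that $\mathbf{f}\notin S_1$.

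Next I would produce a pseudo-Frobenius element of $S$ lying in $S_1\setminus S$. Fix any term order $\prec$ on $\mathbb{N}^d$ and put $\mathbf{a}^\ast := \max_{\prec}(S_1\setminus S)$, which is well defined because $S_1\setminus S$ is a nonempty finite subset of $\mathbb{N}^d$. I claim $\mathbf{a}^\ast\in\mathrm{PF}(S)$. Indeed, $\mathbf{a}^\ast\in\mathcal{H}(S)$; if $\mathbf{a}^\ast\notin\mathrm{PF}(S)$ there would be $\mathbf{s}\in S\setminus\{0\}$ with $\mathbf{a}^\ast+\mathbf{s}\notin S$, and then $\mathbf{a}^\ast+\mathbf{s}$ would lie both in the monoid $S_1$ and in the cone $\mathrm{pos}(S)$, hence in $S_1\setminus S$, while $\mathbf{a}^\ast+\mathbf{s}\succ\mathbf{a}^\ast$ because $\mathbf{s}\neq 0$, contradicting the maximality of $\mathbf{a}^\ast$. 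Moreover $\mathbf{a}^\ast\in S_1$ but $\mathbf{f}\notin S_1$, so $\mathbf{a}^\ast\neq\mathbf{f}$; thus $\mathbf{a}^\ast\in\mathrm{PF}(S)\setminus\{\mathbf{f}\}$.

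To finish, I would use the hypothesis on $\mathrm{PF}(S)$. If $\mathrm{PF}(S)=\{\mathbf{f}\}$, the very existence of $\mathbf{a}^\ast$ is already a contradiction. If $\mathrm{PF}(S)=\{\mathbf{f},\mathbf{f}/2\}$, then necessarily $\mathbf{a}^\ast=\mathbf{f}/2$, so $\mathbf{f}/2\in S_1\setminus S\subseteq S_1$; but $S_1$ is closed under addition, whence $\mathbf{f}=\mathbf{f}/2+\mathbf{f}/2\in S_1$, contradicting $\mathbf{f}\notin S_1$. Either way we reach a contradiction, so $S$ is $\mathcal{C}$-irreducible. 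The argument is essentially a transcription of the numerical case (it is the converse direction of Theorem \ref{Th Irr}); the only points requiring a little attention are the finiteness of $S_i\setminus S$ — which is exactly where the $\mathcal{C}$-semigroup hypothesis and the equality of cones enter, and without which the ``maximal element'' argument would have to be replaced by a possibly non-terminating iteration — and checking that $\mathbf{a}^\ast+\mathbf{s}$ stays inside $S_1\setminus S$, both of which follow cleanly from the inclusion $\mathcal{H}(S_i)\subseteq\mathcal{H}(S)$.
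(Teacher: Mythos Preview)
Your proof is correct and follows essentially the same approach as the paper's: both argue by contradiction, use the finiteness of $S_i\setminus S$ (which follows from $S_i\setminus S\subseteq\mathcal{H}(S)$ and the $\mathcal{C}$-semigroup hypothesis), and produce a pseudo-Frobenius element of $S$ inside $S_i\setminus S$ by a maximality argument. The only cosmetic differences are that the paper takes $\preceq_S$-maximal elements $\mathbf{b}_i\in S_i\setminus S$ for \emph{both} $i=1,2$ and then rules out the possible configurations of $(\mathbf{b}_1,\mathbf{b}_2)$ in $\mathrm{PF}(S)$, whereas you first fix the index with $\mathbf{f}\notin S_1$ and work only on that side using a term-order maximum; your variant is slightly more streamlined, since knowing $\mathbf{f}\notin S_1$ immediately forces $\mathbf{a}^\ast\neq\mathbf{f}$ and collapses the case analysis.
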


\begin{proof}
Suppose there exist two finitely generated submonoids $S_1$ and $S_2$ of $\mathbb{N}^d$ with  $\mathrm{pos}(S_1) = \mathrm{pos}(S_2) = \mathrm{pos}(S)$ such that $S = S_1 \cap S_2$; in particular, $S_1$ and $S_2$ are $\mathcal{C}-$semigroups. For $i=1,2,$ we take $\mathbf b_i \in \mathrm{maximals}_{\preceq_S} S_i \setminus S.$ Since $S_i \setminus S$ is finite, $\mathbf b_i$ is well-defined for $i = 1,2$. By maximality, $\mathbf b_i + \mathbf{a} \in S$ for every $\mathbf{a} \in S \setminus \{0\},\ i = 1,2$, that is to say, $\mathbf b_i \in \mathrm{PF}(S).$ Therefore, $\mathbf{b}_i = \mathbf{f},\ i = 1,2$ or $\mathbf{b}_i = \mathbf{f}$ and $\mathbf{f}_j = \mathbf{f}/2,\ \{i,j\} = \{1,2\}$. In the first case, we obtaim $\mathbf{b}_1 = \mathbf{b}_2$ which is not possible because $\mathbf{b}_i \not\in S,\ i = 1,2$. In the second case, we obtain that $\mathbf{f} \in S_1 \cap S_2 = S$ which obviously is impossible. Therefore, $\mathbf{b}_1$ or $\mathbf{b}_2$ does not exist and we conclude that $S = S_1$ or $S = S_2$.
\end{proof}

\section{PI-monoids}

Let $\preceq_{\mathbb{N}^d}$ be the usual partial order in $\mathbb{N}^d$, that is, $\mathbf{a} = (a_1, \ldots, a_d) \preceq_{\mathbb{N}^d} \mathbf{b} = (b_1, \ldots, b_d)$ if and only if $a_i \leq b_i,\ i \in \{1, \ldots, d\}$.

\begin{definition}
If $S$ is a submonoid of $\mathbb{N}^d$, we define the \textbf{multiplicity} of $S$ as $m(S) := inf_{\preceq_{\mathbb{N}^d}} (S \setminus \{0\})$.
\end{definition}

If $d=1$, the notion of multiplicity defined above agrees with the notion of multipliciy of a numerical semigroup (see \cite[Section 2.2]{ns-book}) Let us introduce a new family of submonoids of $\mathbb{N}^d$, that we have called \textbf{principal ideal monoids}, or $\mathrm{PI}-$monoids for short. This family generalizes the notion of MED-semigroups (see \cite[Chapter 3]{ns-book} for $d > 1$.

\begin{definition}
A submonoid $S$ of $\mathbb{N}^d$ is said to be a \textbf{PI-monoid} if there exist a submonoid $T$ of $\mathbb{N}^d$ and $\mathbf a \in T \setminus \{0\}$ such that $S = \big(\mathbf a\ + T \big) \cup \{0\}$.
\end{definition}

Clearly, PI-monoids are not always affine semigroups, since they are not necessarily finitely generated. We will explicitly provide a minimal generating system of any PI-monoid later on, first let us explore some its properties.

\begin{example}\label{exPI}
In $\mathbf N^2$, an example of finitely generated PI-monoid is $S_1=(2,2)+\langle  (1,1) \rangle=\langle (2,2),(3,3)\rangle$.

To obtain a non-finitely generated PI-monoid of $\mathbf N^2$, consider $T=\mathbf N^2$ and $a=(1,1)$. The PI-monoid $S_2=(1,1)+\mathbf N^2$ is equal to $\{(x,y)\mid x\geq 1,~ y\geq 1\}\cup \{(0,0)\}$ which is not a finitely generated submonoid of $\mathbf N^2$.
\end{example}

\begin{lemma}\label{lemma1n}
If $S \subseteq \mathbb{N}^d$ is a PI-monoid, then $m(S) \in S \setminus \{0\}.$ In particular, $m(S) = \min_{\preceq_{\mathbb{N}^d}} (S \setminus \{0\}).$
\end{lemma}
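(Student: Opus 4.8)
The plan is to unwind the definition of a PI-monoid and locate the multiplicity explicitly. By hypothesis there is a submonoid $T \subseteq \mathbb{N}^d$ and an element $\mathbf{a} \in T \setminus \{0\}$ with $S = (\mathbf{a} + T) \cup \{0\}$. First I would observe that $S \setminus \{0\} = \mathbf{a} + T$, and since $0 \in T$ we have $\mathbf{a} = \mathbf{a} + 0 \in S \setminus \{0\}$. So $\mathbf{a}$ is itself a candidate for the minimum.

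Next I would show that $\mathbf{a}$ is in fact a lower bound for $S \setminus \{0\}$ with respect to $\preceq_{\mathbb{N}^d}$. Indeed, any nonzero element of $S$ has the form $\mathbf{a} + \mathbf{t}$ with $\mathbf{t} \in T \subseteq \mathbb{N}^d$, hence $\mathbf{a} \preceq_{\mathbb{N}^d} \mathbf{a} + \mathbf{t}$ because $\mathbf{t}$ has nonnegative coordinates. Therefore $\mathbf{a}$ is a lower bound of $S \setminus \{0\}$ that belongs to $S \setminus \{0\}$; consequently $\inf_{\preceq_{\mathbb{N}^d}}(S \setminus \{0\}) = \mathbf{a}$, this infimum is attained, and it equals $\min_{\preceq_{\mathbb{N}^d}}(S \setminus \{0\})$. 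By the definition of multiplicity, $m(S) = \mathbf{a} \in S \setminus \{0\}$, which is exactly the claim.

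There is no real obstacle here: the only subtlety worth spelling out is that the infimum of a subset of $(\mathbb{N}^d, \preceq_{\mathbb{N}^d})$ need not in general be attained nor even exist as an element of $\mathbb{N}^d$ coordinatewise comparable below everything, so the content of the lemma is precisely that for PI-monoids this pathology does not occur, and the witness is the defining element $\mathbf{a}$. One should also note that $\mathbf{a}$ is uniquely determined by $S$ as this minimum, which justifies speaking of "the" multiplicity and will be convenient when producing the minimal generating system of a PI-monoid afterwards.
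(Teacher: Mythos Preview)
Your proof is correct and follows the same approach as the paper's, which simply writes $S = (\mathbf a + T) \cup \{0\}$ and asserts that $\mathbf a = \min_{\preceq_{\mathbb{N}^d}} (S \setminus \{0\})$ is clear. You have merely spelled out in detail why $\mathbf a$ is a lower bound attained in $S\setminus\{0\}$, which is exactly the content behind that ``Clearly''.
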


\begin{proof}
Since $S$ is a PI-monoid, there exist a submonoid $T$ of $\mathbb{N}^d$ and $\mathbf a \in T \setminus \{0\}$ such that $S = \big(\mathbf a + T \big) \cup \{0\}$. Clearly, $\mathbf a = \min_{\preceq_{\mathbb{N}^d}} (S \setminus \{0\}).$
\end{proof}

The following result is the generalization of \cite[Proposition 3.12]{ns-book}.

\begin{proposition}\label{Prop2}
Let $S$ be a submonoid of $\mathbb{N}^d$. Then, $S$ is a PI-monoid if and only if $m(S) \in S \setminus \{0\}$ and $(S \setminus \{0\}) - m(S)$ is a submonoid of $\mathbb{N}^d$.
\end{proposition}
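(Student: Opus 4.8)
The plan is to prove both implications, using Lemma \ref{lemma1n} for the forward direction and a direct construction for the converse. For the forward implication, suppose $S$ is a PI-monoid, so $S = (\mathbf{a} + T) \cup \{0\}$ for some submonoid $T$ of $\mathbb{N}^d$ and some $\mathbf{a} \in T \setminus \{0\}$. By Lemma \ref{lemma1n}, $m(S) \in S \setminus \{0\}$ and in fact $m(S) = \mathbf{a}$. It then remains to show $(S \setminus \{0\}) - \mathbf{a}$ is a submonoid of $\mathbb{N}^d$. Since $S \setminus \{0\} = \mathbf{a} + T$, we get $(S \setminus \{0\}) - \mathbf{a} = T$, which is a submonoid by hypothesis. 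So this direction is essentially immediate once Lemma \ref{lemma1n} is in hand.

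For the converse, assume $m(S) \in S \setminus \{0\}$ and that $T := (S \setminus \{0\}) - m(S)$ is a submonoid of $\mathbb{N}^d$. Write $\mathbf{a} := m(S)$. The claim is that $S = (\mathbf{a} + T) \cup \{0\}$ and that $\mathbf{a} \in T \setminus \{0\}$, which would exhibit $S$ as a PI-monoid. First, $\mathbf{a} + T = \mathbf{a} + (S \setminus \{0\}) - \mathbf{a} = S \setminus \{0\}$ by definition, so $(\mathbf{a} + T) \cup \{0\} = S$. It remains to check $\mathbf{a} \in T$; equivalently $2\mathbf{a} \in S \setminus \{0\}$, i.e. $2\mathbf{a} \in S$. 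But $\mathbf{a} \in S$ and $S$ is a monoid, so $2\mathbf{a} = \mathbf{a} + \mathbf{a} \in S$, and since $\mathbf{a} \neq 0$ we have $2\mathbf{a} \neq 0$, hence $\mathbf{a} \in T$. Also $\mathbf{a} \neq 0$ because $\mathbf{a} \in S \setminus \{0\}$, so $\mathbf{a} \in T \setminus \{0\}$. Finally one should note that $T \subseteq \mathbb{N}^d$: every element of $S \setminus \{0\}$ is $\succeq_{\mathbb{N}^d} m(S) = \mathbf{a}$ by definition of the infimum (and the fact, from Lemma \ref{lemma1n}, that the infimum is attained), so $(S \setminus \{0\}) - \mathbf{a} \subseteq \mathbb{N}^d$; this is needed so that "submonoid of $\mathbb{N}^d$" even makes sense in the statement.

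I expect the main subtlety to be purely bookkeeping rather than a genuine obstacle: one must be careful that $m(S)$ is a genuine minimum (not merely an infimum) before writing $(S\setminus\{0\}) - m(S)$, which is why the hypothesis $m(S) \in S\setminus\{0\}$ is explicitly included, and one must verify that every element of $S \setminus \{0\}$ dominates $m(S)$ coordinatewise so that the difference lands in $\mathbb{N}^d$. Both points follow directly from the definition of $m(S)$ as the $\preceq_{\mathbb{N}^d}$-infimum together with its attainment; there is no hard step.
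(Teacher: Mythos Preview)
Your proposal is correct and follows essentially the same route as the paper's own proof: use Lemma~\ref{lemma1n} to identify $m(S)$ with $\mathbf{a}$ and read off $(S\setminus\{0\})-m(S)=T$ for the forward direction, and for the converse simply observe $S=(m(S)+T)\cup\{0\}$ with $T=(S\setminus\{0\})-m(S)$. Your write-up is in fact more careful than the paper's, which omits the explicit verification that $m(S)\in T\setminus\{0\}$ and that $T\subseteq\mathbb{N}^d$; note, incidentally, that the latter follows from $m(S)$ being a lower bound for $S\setminus\{0\}$ (the defining property of an infimum) and does not require attainment.
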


\begin{proof}
If $S$ is a PI-monoid, by Lemma \ref{lemma1n}, $m(S) = \min_{\preceq_{\mathbb{N}^d}} (S \setminus \{0\});$ moreover, there is a submonoid $T$ of $\mathbb{N}^d$ such that $S = (m(S) + T) \cup \{0\}$. So, $(S \setminus \{0\}) - m(S) = T$ is a submonoid of $\mathbb{N}^d$. For the converse implication, it suffices to note that $S = ( m(S) +T) \cup \{0\}$ and that $T = (S \setminus \{0\}) - m(S)$.
\end{proof}

\begin{corollary}\label{Cor3}
If $S \subseteq \mathbb{N}^d$ is a PI-monoid, then there exist an unique submonoid $T$ of $\mathbb{N}^d$ and an unique $\mathbf a \in T \setminus \{0\}$ such that $S = (\mathbf a + T) \setminus \{0\}$
\end{corollary}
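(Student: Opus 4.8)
Corollary \ref{Cor3} asserts that for a PI-monoid $S$, the pair $(T, \mathbf{a})$ in the defining equation $S = (\mathbf{a} + T) \cup \{0\}$ is unique. Wait — let me re-read. The statement writes $S = (\mathbf a + T) \setminus \{0\}$, but this must be a typo for $(\mathbf a + T) \cup \{0\}$ consistent with the Definition. Let me plan a proof of uniqueness.

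The plan is to reduce the whole statement to one observation: the translation vector $\mathbf{a}$ in a presentation $S = (\mathbf{a} + T)\cup\{0\}$ is forced to be the multiplicity $m(S)$, after which $T$ is determined as a translate of $S\setminus\{0\}$. Existence is nothing new: if $S$ is a PI-monoid, then by Proposition \ref{Prop2} we have $m(S)\in S\setminus\{0\}$ and $T:=(S\setminus\{0\})-m(S)$ is a submonoid of $\mathbb{N}^d$, and setting $\mathbf{a}:=m(S)\in T\setminus\{0\}$ gives $S=(\mathbf{a}+T)\cup\{0\}$. So the content of the corollary is uniqueness.

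For uniqueness I would take an arbitrary presentation $S=(\mathbf{a}'+T')\cup\{0\}$ with $T'$ a submonoid of $\mathbb{N}^d$ and $\mathbf{a}'\in T'\setminus\{0\}$, and first show $\mathbf{a}'=m(S)$. Since $0\in T'$, we get $\mathbf{a}'\in S\setminus\{0\}$; and any $\mathbf{b}\in S\setminus\{0\}$ can be written $\mathbf{b}=\mathbf{a}'+\mathbf{t}$ with $\mathbf{t}\in T'\subseteq\mathbb{N}^d$, hence $\mathbf{a}'\preceq_{\mathbb{N}^d}\mathbf{b}$. Therefore $\mathbf{a}'=\min_{\preceq_{\mathbb{N}^d}}(S\setminus\{0\})$, which equals $m(S)$ by Lemma \ref{lemma1n}. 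Once $\mathbf{a}'=m(S)=\mathbf{a}$ is established, translating back gives $T'=(\mathbf{a}'+T')-\mathbf{a}'=(S\setminus\{0\})-\mathbf{a}=T$, and we are done.

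There is essentially no obstacle here; the one step that carries the argument is the claim that a valid translation vector must be the coordinatewise minimum of $S\setminus\{0\}$, and this is precisely the computation already isolated in the proof of Lemma \ref{lemma1n}. (I would also note in passing that the $\setminus\{0\}$ in the displayed identity of the statement should read $\cup\{0\}$, to match the Definition of PI-monoid, so that the argument above applies verbatim.)
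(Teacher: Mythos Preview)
Your proposal is correct and follows essentially the same approach as the paper: the paper's proof simply states that $\mathbf{a}$ must equal $m(S)$ and $T$ must equal $(S\setminus\{0\})-m(S)$, which is exactly what you spell out in detail (and you are right that $\setminus\{0\}$ in the statement is a typo for $\cup\{0\}$). Your version is just a more explicit rendering of the same one-line argument.
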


\begin{proof}
It is clear that $\mathbf a$ must be equal to $m(S)$ and that $T$ must be equal to $(S \setminus \{0\}) - m(S)$.
\end{proof}

\begin{remark}
Given a submonoid $S$ of $\mathbb{N}^d$, we will write $\mathrm{PI}(S)$ for the set \[\big\{ (\mathbf{a} + S) \cup \{0\}\ \mid \mathbf a \in S \setminus \{0\} \big\}.\] Observe that, as an immediate consequence of Corollary \ref{Cor3}, we have that
the set $\{ \mathrm{PI}(S) \mid S\ \text{is a submonoid of}\ \mathbb{N}^d\}$ is a partition of the set of all PI-monoids of $\mathbb{N}^d$. Moreover, if $\mathscr{A}$ denotes the set of all submonoids of $\mathbb{N}^d$, for some $d$, and $\mathscr{P}i$ denotes the set of all PI-monoids of $\mathbb{N}^d$, for some $d$, we have an injective map \[\mathscr{A} \longrightarrow \mathscr{P}i;\ S \mapsto (\min_{lex}(S \setminus \{0\}) + S) \cup \{0\},\] where $lex$ means the lexicographic term order on $\mathbb{N}^d$.
\end{remark}

Recall that a system of generators $\mathcal{A}$ of a submonoid $A$ of $\mathbb{N}^d$ is said to be minimal if no proper subset of $\mathcal{A}$ generates $A$. The following result identifies a minimal system of generators of an PI-monoid.

\begin{proposition}\label{Prop8}
Let $S$ be a submonoid of $\mathbb{N}^d$. Then $S$ is a PI-monoid if and only if \[\big(\mathrm{Ap}(S,m(S)) \setminus \{0\}\big) \cup \{m(S)\}\] is a minimal system of generators of $S$.
\end{proposition}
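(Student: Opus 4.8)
The plan is to prove both implications through the structure theorem for PI-monoids (Corollary \ref{Cor3}) together with the Apéry-set decomposition of Proposition \ref{Prop6-Cor7}. First I would settle the forward implication. Assume $S$ is a PI-monoid; by Lemma \ref{lemma1n}, $m(S) \in S \setminus \{0\}$ and, by Corollary \ref{Cor3}, there is a submonoid $T$ of $\mathbb{N}^d$ with $S = (m(S) + T) \cup \{0\}$, in fact $T = (S \setminus \{0\}) - m(S)$. Applying Proposition \ref{Prop6-Cor7} with $\mathbf{b} = m(S)$, the set $\big(\mathrm{Ap}(S,m(S)) \setminus \{0\}\big) \cup \{m(S)\}$ is already a system of generators of $S$; so the only thing to check is minimality. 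Here I would argue that no element of $\mathrm{Ap}(S,m(S)) \setminus \{0\}$ can be dropped: if $\mathbf{c} \in \mathrm{Ap}(S,m(S)) \setminus \{0\}$ were a sum of other generators, at least one summand would be a nonzero Apéry element $\mathbf{c}'$ with $\mathbf{c} - \mathbf{c}' \in S$, whence $\mathbf{c} - m(S) - (\mathbf{c}' - m(S)) = (\mathbf{c} - \mathbf{c}') - m(S) \notin S$ contradicting that $T$ is closed under the partial-order differences forced by $\mathbf{c}, \mathbf{c}' \in m(S) + T$; equivalently, using that $\mathbf{c} - m(S), \mathbf{c}' - m(S) \in T$ and $T = (S\setminus\{0\}) - m(S)$ one derives $\mathbf{c} - \mathbf{c}' \notin S$, a contradiction. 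One must also check $m(S)$ itself is not redundant, which is immediate since $m(S) = \min_{\preceq_{\mathbb{N}^d}}(S\setminus\{0\})$ and every other generator is strictly $\preceq_{\mathbb{N}^d}$-above it or incomparable, so $m(S)$ cannot be a nontrivial sum.

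For the converse, assume $\mathcal{G} := \big(\mathrm{Ap}(S,m(S)) \setminus \{0\}\big) \cup \{m(S)\}$ is a minimal system of generators of $S$; in particular $m(S) \in S \setminus \{0\}$. By Proposition \ref{Prop2} it suffices to show $T := (S \setminus \{0\}) - m(S)$ is a submonoid of $\mathbb{N}^d$, i.e. that $T \subseteq \mathbb{N}^d$ and $T$ is closed under addition (it contains $0$ since $m(S) \in S \setminus \{0\}$). The inclusion $T \subseteq \mathbb{N}^d$ is the crux: one needs that for every $\mathbf{s} \in S \setminus \{0\}$, $\mathbf{s} - m(S) \in \mathbb{N}^d$, i.e. $m(S) \preceq_{\mathbb{N}^d} \mathbf{s}$. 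Minimality of $\mathcal{G}$ is what forces this: write $\mathbf{s}$ as a nonnegative combination of $\mathcal{G}$; if $m(S)$ appears with positive coefficient we are done, and otherwise $\mathbf{s}$ is a combination of nonzero Apéry elements $\mathbf{c}_i$, each of which satisfies $\mathbf{c}_i - m(S) \in \mathrm{pos}(S) \setminus S$; I would then show that minimality rules out this last case unless $\mathbf{s}$ itself is one of the $\mathbf{c}_i$ — and for a single Apéry element $\mathbf{c}$ one uses that $\mathbf{c} - m(S) \notin S$ forces (by the definition of $m(S)$ as the $\preceq_{\mathbb{N}^d}$-infimum, which lies in $S$) that $\mathbf{c} - m(S)$ still has nonnegative coordinates, because $\mathbf{c} = m(S) + t$ for the unique representation from Proposition \ref{Prop6-Cor7}. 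Once $T \subseteq \mathbb{N}^d$ is established, closure under addition follows because if $\mathbf{s}_1 - m(S), \mathbf{s}_2 - m(S) \in T$ then $(\mathbf{s}_1 + \mathbf{s}_2 - m(S)) - m(S) = \mathbf{s}_1 + \mathbf{s}_2 - 2m(S)$, and $\mathbf{s}_1 + \mathbf{s}_2 - m(S) \in S \setminus \{0\}$ while $\mathbf{s}_1 + \mathbf{s}_2 - 2m(S) \in \mathbb{N}^d$ by the same coordinate-wise argument applied twice; hence the sum lies in $T$.

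The main obstacle I anticipate is the converse direction, specifically extracting from the purely combinatorial hypothesis "$\mathcal{G}$ is a \emph{minimal} generating set" the order-theoretic conclusion "$m(S) \preceq_{\mathbb{N}^d} \mathbf{s}$ for all $\mathbf{s} \in S \setminus \{0\}$". The subtlety is that minimality of $\mathcal{G}$ must be used in an essential way — without it the statement is false, as a non-PI-monoid can still be generated (non-minimally) by $\mathrm{Ap}(S,m(S))$-type data. The clean way to leverage it is: if some $\mathbf{s} \in S\setminus\{0\}$ had $\mathbf{s} - m(S) \notin \mathbb{N}^d$, produce from $\mathbf{s}$ a relation showing one Apéry generator is a combination of the others, contradicting minimality; carrying this out carefully, and handling the bookkeeping of which generators can appear in a representation of an Apéry element, is where the real work lies. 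The remaining steps (minimality in the forward direction, closure under addition in the converse) are short once Proposition \ref{Prop6-Cor7} and the $\preceq_{\mathbb{N}^d}$-minimality of $m(S)$ are in hand.
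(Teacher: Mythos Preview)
Your forward direction follows the paper's strategy, though the minimality argument is tangled. The clean version (which is what the paper does) is: if an Ap\'ery element $\mathbf{a}$ decomposed as $\mathbf{b}+\mathbf{c}$ with $\mathbf{b},\mathbf{c}\in S\setminus\{0\}$, then since $T=(S\setminus\{0\})-m(S)$ is a submonoid one gets $(\mathbf{b}-m(S))+(\mathbf{c}-m(S))\in T$, i.e.\ $\mathbf{a}-m(S)\in S$, contradicting $\mathbf{a}\in\mathrm{Ap}(S,m(S))$. Your line ``one derives $\mathbf{c}-\mathbf{c}'\notin S$'' is unjustified and in fact contradicts what you just established, namely $\mathbf{c}-\mathbf{c}'\in S$.

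The converse has a genuine gap, caused by misidentifying where the difficulty lies. The inclusion $T\subseteq\mathbb{N}^d$ that you call ``the crux'' is automatic and needs no hypothesis at all: by definition $m(S)=\inf_{\preceq_{\mathbb{N}^d}}(S\setminus\{0\})$ is the coordinate-wise minimum, so $m(S)\preceq_{\mathbb{N}^d}\mathbf{s}$ for every $\mathbf{s}\in S\setminus\{0\}$. The real content is closure of $T$ under addition, which you wave through in one line by asserting ``$\mathbf{s}_1+\mathbf{s}_2-m(S)\in S\setminus\{0\}$''. That assertion \emph{is} exactly what has to be proved (it says $\mathbf{s}_1+\mathbf{s}_2\notin\mathrm{Ap}(S,m(S))$), and your argument for it never uses the minimality of $\mathcal{G}$. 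So minimality, which you correctly flag as essential, is being spent on a step that is free and withheld from the step that needs it.

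The paper closes this gap by arguing the contrapositive: if $T$ is not closed under addition, one produces $\mathbf{a},\mathbf{b}\in\mathrm{Ap}(S,m(S))\setminus\{0\}$ with $\mathbf{a}+\mathbf{b}-m(S)\notin S$, hence $\mathbf{a}+\mathbf{b}\in\mathrm{Ap}(S,m(S))$; then $\mathbf{a}+\mathbf{b}$ is a generator that is visibly a sum of two others, so $\mathcal{G}$ is not minimal. That is the place where minimality genuinely enters.
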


\begin{proof}
By Lemma \ref{lemma1n}, if $S$ is a PI-monoid, then $m(S) \in S \setminus \{0\}$ Moreover, by Proposition \ref{Prop6-Cor7}, we have that $\mathcal{A} := \big(\mathrm{Ap}(S,m(S)) \setminus \{0\}\big) \cup \{m(S)\}$ is a system of generators of $S$. So, it suffices to prove that $\mathcal{A}$ is minimal. Let us assume the contrary, that is, there exists $\mathbf a \in \mathcal A$ such that $\mathcal{A} \setminus \{\mathbf a\}$ generates $S$. By the minimality of $m(S),\ \mathbf a \neq m(S)$. Thus, $\mathbf a \in \mathrm{Ap}(S,m(S)) \setminus \{0\}$ and there exists $\mathbf b$ and $\mathbf c \in S$ with $\mathbf a = \mathbf b + \mathbf c$. By Proposition \ref{Prop2}, we know that $\mathbf b - m(S) + \mathbf c - m(S) = \mathbf d - m(S)$ for some $\mathbf d \in S \setminus \{0\}$. Therefore, $\mathbf a = \mathbf d + m(S) \not\in \mathrm{Ap}(S,m(S))$, which is impossible. Conversely, if $S$ is not a PI-monoid, by Proposition \ref{Prop2}, we have that $(S \setminus \{0\}) - m(S)$ is not a submonoid of $\mathbb{N}^d$. So, there exists $\mathbf a$ and $\mathbf b \in \mathrm{Ap}(S,m(S)) \setminus \{0\}$ such that $\mathbf a - m(S) + \mathbf b - m(S) \not\in (S \setminus \{0\}) - m(S)$. In particular, $\mathbf a + \mathbf b - m(S) \not\in S$ and consequently $\mathbf a + \mathbf b \in \mathrm{Ap}(S,m(S)).$ So, $\mathrm{Ap}(S,m(S))$ is not a minimal system of generators of $S$.
\end{proof}

Now, we will show that PI-monoids have non-trivial infinite pseudo-Frobenius set. Recall that every submonoid $S$ of $\mathbb{N}^d$ defines a natural partial order on $\mathbb{N}^d$ as follows: $\mathbf x \preceq \mathbf y$ if and only if $\mathbf y - \mathbf x \in S$. As in the previous section this partial order will be denoted as $\preceq_S$.

\begin{corollary}\label{Cor10}
A submonoid $S$ of $\mathbb{N}^d$ is a PI-monoid if and only if $m(S) \in S \setminus \{0\}$ and $\mathrm{Ap}(S,m(S)) \setminus \{0\} = m(S) + \mathrm{PF}(S)$.
\end{corollary}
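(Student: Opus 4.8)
The plan is to deduce both implications from Proposition \ref{Prop2}, which says that $S$ is a PI-monoid exactly when $m(S) \in S \setminus \{0\}$ and $T := (S \setminus \{0\}) - m(S)$ is a submonoid of $\mathbb{N}^d$. Note that, whenever $m(S) \in S \setminus \{0\}$, Lemma \ref{lemma1n} identifies $m(S)$ with $\min_{\preceq_{\mathbb{N}^d}}(S \setminus \{0\})$, so that automatically $T \subseteq \mathbb{N}^d$ and $0 \in T$; thus the only substantive point in either direction is the interplay between $\mathrm{Ap}(S, m(S))$, $\mathrm{PF}(S)$ and closure under addition.

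First I would settle the inclusion $m(S) + \mathrm{PF}(S) \subseteq \mathrm{Ap}(S,m(S)) \setminus \{0\}$, which needs only $m(S) \in S \setminus \{0\}$: if $\mathbf{f} \in \mathrm{PF}(S) \subseteq \mathcal{H}(S)$, then $\mathbf{f} + m(S) \in S \setminus \{0\}$ and $(\mathbf{f} + m(S)) - m(S) = \mathbf{f} \in \mathrm{pos}(S) \setminus S$, so $\mathbf{f} + m(S) \in \mathrm{Ap}(S,m(S)) \setminus \{0\}$. For the forward implication it then remains to prove the reverse inclusion under the PI-hypothesis $S = (m(S) + T) \cup \{0\}$. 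Take $\mathbf{c} \in \mathrm{Ap}(S,m(S)) \setminus \{0\}$; since $\mathbf{c} \in S \setminus \{0\} = m(S) + T$ we have $\mathbf{c} - m(S) \in T \subseteq \mathbb{N}^d$, and $\mathbf{c} - m(S) \in \mathrm{pos}(S) \setminus S$, hence $\mathbf{c} - m(S) \in \mathcal{H}(S)$; moreover, for any $\mathbf{s} = m(S) + \mathbf{t} \in S \setminus \{0\}$ (so $\mathbf{t} \in T$) one has $(\mathbf{c} - m(S)) + \mathbf{s} = m(S) + \big((\mathbf{c} - m(S)) + \mathbf{t}\big) \in m(S) + T \subseteq S$. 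Therefore $\mathbf{c} - m(S) \in \mathrm{PF}(S)$, which gives the desired equality.

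For the converse I would assume $m(S) \in S \setminus \{0\}$ and $\mathrm{Ap}(S,m(S)) \setminus \{0\} = m(S) + \mathrm{PF}(S)$, and check via Proposition \ref{Prop2} that $T = (S \setminus \{0\}) - m(S)$ is closed under addition, i.e. that $\mathbf{s} + \mathbf{t} - m(S) \in S \setminus \{0\}$ for all $\mathbf{s}, \mathbf{t} \in S \setminus \{0\}$; non-vanishing is clear since $\mathbf{s} + \mathbf{t} - m(S) \succeq_{\mathbb{N}^d} \mathbf{s}$. If $\mathbf{s} - m(S) \in S$, then $\mathbf{s} + \mathbf{t} - m(S) = (\mathbf{s} - m(S)) + \mathbf{t} \in S$. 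Otherwise, writing $\mathbf{s} = k\, m(S) + \mathbf{c}$ with $\mathbf{c} \in \mathrm{Ap}(S,m(S))$ as in Proposition \ref{Prop6-Cor7}, we must have $k = 0$ (else $\mathbf{s} - m(S) \in S$), so $\mathbf{s} = \mathbf{c} \in \mathrm{Ap}(S,m(S)) \setminus \{0\} = m(S) + \mathrm{PF}(S)$; hence $\mathbf{s} - m(S) \in \mathrm{PF}(S)$ and again $\mathbf{s} + \mathbf{t} - m(S) = (\mathbf{s} - m(S)) + \mathbf{t} \in S$ because $\mathbf{t} \in S \setminus \{0\}$. In both cases $T$ is a submonoid, so $S$ is a PI-monoid by Proposition \ref{Prop2}.

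The main obstacle, such as it is, is recognising that both non-trivial steps reduce to the same observation: for $\mathbf{c} \in \mathrm{Ap}(S,m(S)) \setminus \{0\}$, the property $\mathbf{c} - m(S) \in \mathrm{PF}(S)$ is equivalent to $(\mathbf{c} - m(S)) + (S \setminus \{0\}) \subseteq S$, which is exactly what closure of $T$ under addition encodes. Everything else — the identification of $m(S)$ with the componentwise minimum of $S \setminus \{0\}$, and the $\mathcal{H}(S)$/$\mathrm{pos}(S)$ bookkeeping — is routine given Lemma \ref{lemma1n}, Proposition \ref{Prop6-Cor7} and the definitions.
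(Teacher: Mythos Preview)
Your proof is correct, but it follows a different route from the paper's. The paper argues via Proposition~\ref{Prop8} and Proposition~\ref{Prop ApPF}: for the forward direction it observes that $\big(\mathrm{Ap}(S,m(S)) \setminus \{0\}\big) \cup \{m(S)\}$ being a \emph{minimal} generating system forces $\mathrm{Ap}(S,m(S)) \setminus \{0\} = \mathrm{maximals}_{\preceq_S}\mathrm{Ap}(S,m(S))$, and then invokes Proposition~\ref{Prop ApPF} to identify these maximals (shifted by $m(S)$) with $\mathrm{PF}(S)$; the converse runs the same chain backwards. You instead bypass both intermediate results and work directly with Proposition~\ref{Prop2}, verifying the equality $\mathrm{Ap}(S,m(S)) \setminus \{0\} = m(S) + \mathrm{PF}(S)$ by hand in the forward direction (using that $T$ is closed under addition to push $(\mathbf{c}-m(S))+\mathbf{s}$ back into $m(S)+T$), and checking closure of $T$ under addition in the converse via the case split on whether $\mathbf{s}-m(S)\in S$, together with Proposition~\ref{Prop6-Cor7}. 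Your argument is more self-contained and elementary; the paper's is shorter but relies on the machinery of Propositions~\ref{Prop8} and~\ref{Prop ApPF}, so it fits more naturally into the logical flow of the section (indeed, the paper's Theorem~\ref{Th11} packages all these equivalences together). One minor remark: your appeal to Lemma~\ref{lemma1n} for the identification $m(S)=\min_{\preceq_{\mathbb{N}^d}}(S\setminus\{0\})$ is, strictly speaking, stated there only under the PI-hypothesis, but the fact follows immediately from the definition $m(S)=\inf_{\preceq_{\mathbb{N}^d}}(S\setminus\{0\})$ once you assume $m(S)\in S\setminus\{0\}$.
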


\begin{proof}
If $S$ is a PI-monoid, then $m(S) \in S$ by Lemma \ref{lemma1n} and, by Proposition \ref{Prop8}, $\big(\mathrm{Ap}(S,m(S)) \setminus \{0\}\big) \cup \{m(S)\}$ is a minimal system of generators of $S$. Therefore, $\mathrm{Ap}(S,m(S)) \setminus \{0\} = \mathrm{maximals}_{\preceq_S}(\mathrm{Ap}(S,m(S))$. Now, by Proposition \ref{Prop ApPF}, we are done. Conversely, let us suppose that $m(S) \in S$ and that $\mathrm{Ap}(S,m(S)) \setminus \{0\} = m(S) + \mathrm{PF}(S)$. By Proposition \ref{Prop ApPF}, we have that $\mathrm{PF}(S) = \mathrm{maximals}_{\preceq_S}(\mathrm{Ap}(S,m(S)) - m(S)$. Therefore, $ \mathrm{Ap}(S,m(S)) \setminus \{0\} = \mathrm{maximals}_{\preceq_S}(\mathrm{Ap}(S,m(S))$, that is, $\big( \mathrm{Ap}(S,m(S)) \setminus \{0\} \big) \cup \{m(S)\}$ is a minimal system of generators of $S$. Now, by Proposition \ref{Prop8}, we are done.
\end{proof}

Putting all this together, we have the following characterization of the PI-monoids.

\begin{theorem}\label{Th11}
Let $S$ be a submonoid of $\mathbb{N}^d$. The following conditions are equivalent:
\begin{enumerate}
\item $S$ is a PI-monoid.
\item $m(S) \in S \setminus \{0\}$ and $(S \setminus \{0\}) - m(S)$ is closed under addition.
\item $\big(\mathrm{Ap}(S,m(S)) \setminus \{0\}\big) \cup \{m(S)\}$ is a minimal system of generators of $S$.
\item $\{m(S) + \mathrm{PF}(S)\} \cup \{m(S)\}$ is a minimal system of generators of $S$.
\end{enumerate}
\end{theorem}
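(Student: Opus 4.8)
The plan is to establish the chain of equivalences (1) $\Leftrightarrow$ (2) $\Leftrightarrow$ (3) $\Leftrightarrow$ (4) essentially by citing the results proved earlier in this section, which were designed precisely for this purpose. The equivalence (1) $\Leftrightarrow$ (2) is exactly the content of Proposition \ref{Prop2}, up to rewriting ``$(S\setminus\{0\})-m(S)$ is a submonoid of $\mathbb{N}^d$'' as ``$(S\setminus\{0\})-m(S)$ is closed under addition'': the set always contains $\mathbf{0}$ (since $m(S)\in S\setminus\{0\}$), so being a submonoid is equivalent to closure under addition. The equivalence (1) $\Leftrightarrow$ (3) is Proposition \ref{Prop8} verbatim, and the equivalence (1) $\Leftrightarrow$ (4) is Corollary \ref{Cor10}, once one observes that the set $\{m(S)+\mathrm{PF}(S)\}\cup\{m(S)\}$ in condition (4) is precisely $\big(\mathrm{Ap}(S,m(S))\setminus\{0\}\big)\cup\{m(S)\}$ under the hypothesis $m(S)\in S\setminus\{0\}$, via the identity $\mathrm{Ap}(S,m(S))\setminus\{0\}=m(S)+\mathrm{PF}(S)$ supplied by Corollary \ref{Cor10}.

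Concretely, I would argue in a cycle. First, (1) $\Rightarrow$ (2): if $S$ is a PI-monoid then $m(S)\in S\setminus\{0\}$ by Lemma \ref{lemma1n}, and $(S\setminus\{0\})-m(S)$ is a submonoid by Proposition \ref{Prop2}, hence closed under addition. Next, (2) $\Rightarrow$ (3): condition (2) says exactly that the hypotheses of the ``converse'' direction of Proposition \ref{Prop2} hold, so $S$ is a PI-monoid, and then Proposition \ref{Prop8} yields that $\big(\mathrm{Ap}(S,m(S))\setminus\{0\}\big)\cup\{m(S)\}$ is a minimal generating set. Then (3) $\Rightarrow$ (4): by Proposition \ref{Prop8}, (3) forces $S$ to be a PI-monoid, and Corollary \ref{Cor10} gives $\mathrm{Ap}(S,m(S))\setminus\{0\}=m(S)+\mathrm{PF}(S)$; substituting this into the generating set of (3) produces exactly the set in (4). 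Finally, (4) $\Rightarrow$ (1): since $m(S)\notin m(S)+\mathrm{PF}(S)$ (as $\mathbf{0}\notin\mathrm{PF}(S)\subseteq\mathcal{H}(S)$), condition (4) implicitly asserts $m(S)\in S\setminus\{0\}$ and that $m(S)+\mathrm{PF}(S)=\mathrm{Ap}(S,m(S))\setminus\{0\}$ — indeed, if this last equality failed one could reproduce the argument of Corollary \ref{Cor10} to contradict minimality — and then Corollary \ref{Cor10} gives that $S$ is a PI-monoid.

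I do not expect a genuine obstacle here, since the theorem is a bookkeeping summary; the only point requiring a small amount of care is the passage (4) $\Rightarrow$ (1), where one must not simply assume the Apéry-set identity but rather deduce it (or deduce PI-monoidness directly) from the minimality of the generating set in (4). The cleanest route is: from (4), the displayed set is a minimal generating set; by Proposition \ref{Prop6-Cor7} the set $\big(\mathrm{Ap}(S,m(S))\setminus\{0\}\big)\cup\{m(S)\}$ is always a (not necessarily minimal) generating set, and its minimal sub-generating set is $\mathrm{maximals}_{\preceq_S}\mathrm{Ap}(S,m(S))$ together with $m(S)$; comparing cardinalities and using that $\mathrm{PF}(S)=\mathrm{maximals}_{\preceq_S}\mathrm{Ap}(S,m(S))-m(S)$ whenever the latter is nonempty (Proposition \ref{Prop ApPF}) forces $\mathrm{maximals}_{\preceq_S}\mathrm{Ap}(S,m(S))=\mathrm{Ap}(S,m(S))\setminus\{0\}$, which is condition (3), and we already saw (3) $\Rightarrow$ (1).
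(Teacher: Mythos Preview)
Your proposal is correct and matches the paper's approach exactly: the paper gives no explicit proof of Theorem~\ref{Th11} at all, simply writing ``Putting all this together'' and treating the four equivalences as an immediate summary of Proposition~\ref{Prop2}, Proposition~\ref{Prop8}, and Corollary~\ref{Cor10}. Your extra care with the direction (4)~$\Rightarrow$~(1) already goes beyond what the paper supplies; the only loose phrase is ``comparing cardinalities,'' which by itself does not force $\mathrm{maximals}_{\preceq_S}\mathrm{Ap}(S,m(S))=\mathrm{Ap}(S,m(S))\setminus\{0\}$---what actually closes the gap is the observation that any $\mathbf a\in\mathrm{Ap}(S,m(S))\setminus\{0\}$, when expressed in the minimal generators $\{m(S)\}\cup\mathrm{maximals}_{\preceq_S}\mathrm{Ap}(S,m(S))$ given by (4), cannot involve $m(S)$ (else $\mathbf a-m(S)\in S$) and cannot involve two or more maximal Ap\'ery elements (else one of them would be strictly $\preceq_S$-below $\mathbf a\in\mathrm{Ap}(S,m(S))$, contradicting maximality), so $\mathbf a$ is itself maximal.
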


\begin{example}
	Let $S_1$ and $S_2$ be the PI-monoids of Example \ref{exPI}. For $S_1$ we have $m(S_1)=\{(2,2)\}$ and  $\mathrm{Ap}(S_1,(2,2))=\{(0,0),(3,3)\}$ obtaining that $\{(2,2),(3,3)\}$ is a system of generators of $S_1$ and that $\mathrm{PF}(S_1)=\{(1,1)\}$.
	For $S_2$, $m(S_2)=(1,1)$ and
	$\mathrm{Ap}(S_2,(1,1))=\{(0,0)\}\cup \{(x,1)\mid x\in\mathbf N \setminus\{0,1\}\} \cup \{ (1,y)\mid y\in\mathbf N \setminus\{0,1\} \}$. So $\{(1,1)\} \cup \{(x,1)\mid x\in\mathbf N \setminus\{0,1\}\} \cup \{ (1,y)\mid y\in\mathbf N \setminus\{0,1\}\}$ is a non-finite system of generators of $S_2$ and $\mathrm{PF}(S_2)=\{(x,0)\mid x\in \mathbf N\setminus\{0\}\} \cup \{(0,y)\mid y\in \mathbf N\setminus\{0\}\}$.
\end{example}

Finally, our last results state the relationship between PI-monoids and MPD-semigroups.

\begin{corollary}\label{Cor last1}
Let $S$ be a PI-monoid. Then $S$ is an affine semigroup if and only if $S$ is an MPD-semigroup. In this case, $\mathrm{Ap}(S,m(S))$ is finite.
\end{corollary}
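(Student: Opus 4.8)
The plan is to establish the stated equivalence by splitting it into its two implications, one of which is tautological and the other of which reduces immediately to the characterization of PI-monoids already in hand.

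First I would dispose of the direction ``$S$ is an MPD-semigroup $\Rightarrow$ $S$ is an affine semigroup'': this is immediate, since in Section \ref{S2} a MPD-semigroup is \emph{defined} to be an affine semigroup (a finitely generated submonoid of $\mathbb{N}^d$) whose semigroup algebra has depth one. So nothing needs to be proved here beyond recalling the definition.

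For the converse, ``$S$ affine $\Rightarrow$ $S$ an MPD-semigroup'', I would argue as follows. By Theorem \ref{Th11}, since $S$ is a PI-monoid the set $\{m(S)\} \cup \big(m(S) + \mathrm{PF}(S)\big)$ is a minimal system of generators of $S$. Now a finitely generated submonoid of $\mathbb{N}^d$ has a finite (indeed unique) minimal generating system, namely its set of irreducible elements, each of which must occur in every generating set; hence the displayed set is finite and so $\mathrm{PF}(S)$ is finite. It then suffices to check that $\mathrm{PF}(S) \neq \varnothing$ and invoke Theorem \ref{Th1}. The only way $\mathrm{PF}(S)$ could be empty is, again by Theorem \ref{Th11}, that the minimal generating system reduces to $\{m(S)\}$, i.e.\ $S = \langle m(S)\rangle$ is free of rank one; but such an $S$ is trivially an MPD-semigroup, its projective dimension being $0 = n-1$. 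In every other case $\mathrm{PF}(S) \neq \varnothing$, and Theorem \ref{Th1} gives that $S$ is an MPD-semigroup and that $\mathrm{PF}(S)$ is finite.

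Finally, for the last assertion, once $S$ is known to be affine the minimal generating system $\big(\mathrm{Ap}(S,m(S)) \setminus \{0\}\big) \cup \{m(S)\}$ of $S$ furnished by Theorem \ref{Th11}(3) (equivalently Proposition \ref{Prop8}) is finite, whence $\mathrm{Ap}(S,m(S))$ is finite. I do not anticipate any genuine obstacle: the substantive work has already been absorbed into Theorems \ref{Th1} and \ref{Th11}. The only delicate point is the bookkeeping around the degenerate case $S = \langle m(S)\rangle$, where $\mathrm{PF}(S) = \varnothing$ and Theorem \ref{Th1} is uninformative, and which therefore has to be handled separately by the remark that a rank-one free monoid is an MPD-semigroup.
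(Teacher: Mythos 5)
Your proof is correct and follows essentially the same route as the paper: both reduce the nontrivial direction to the PI-monoid characterization (Theorem \ref{Th11}, equivalently Corollary \ref{Cor10} and Proposition \ref{Prop8}) combined with Theorem \ref{Th1}, and the finiteness of $\mathrm{Ap}(S,m(S))$ comes from its identification with the minimal generating system. The only difference is that you explicitly treat the degenerate monogenic case $S=\langle m(S)\rangle$, where $\mathrm{PF}(S)=\varnothing$ and Theorem \ref{Th1} cannot be invoked; the paper's proof simply asserts $\mathrm{PF}(S)\neq\varnothing$ from Corollary \ref{Cor10} and silently skips this case, so your extra bookkeeping is a minor improvement rather than a deviation.
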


\begin{proof}
Let $S$ be a finitely generated PI-monoid. By Corollary \ref{Cor10}, $\mathrm{PF}(S) \neq \varnothing$. So, by Theorem \ref{Th1}, $S$ is a MPD-semigroup. The converse is trivial as MPD-semigroups are affine semigroups by definition. The last part is a direct consequence of Theorem \ref{Th1} and Corollary \ref{Cor10}.
\end{proof}

The following result was inspired by \cite[Lemma 2.2]{ADT}:

\begin{corollary}
Let $S$ be a PI-monoid. Then there exist a direct system $(S_\lambda, i_{\lambda \mu})$ of MPD-semigroups contained in $S$ such that $S = \varinjlim_{\lambda \in \Lambda} S_\lambda$, where $i_{\lambda \mu} : S_\lambda \to S_\mu$ is the inclusion map.
\end{corollary}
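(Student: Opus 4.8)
\emph{Proof proposal.} The plan is to present $S$ as the direct limit, along inclusions, of the family of \emph{all} submonoids of $S$ that are MPD-semigroups. Everything follows once we establish one self-contained statement: \emph{if $U$ is a finitely generated submonoid of $S$ with $U\neq S$, then there is an MPD-semigroup $M$ with $U\subseteq M\subseteq S$.} It is worth recording that the obvious approach fails here. Writing $S=(\mathbf a+T)\cup\{0\}$ with $\mathbf a=m(S)\in T\setminus\{0\}$ (Corollary \ref{Cor3}) and exhausting $T$ by finitely generated submonoids $T_\lambda\ni\mathbf a$, one would like to take $S_\lambda=(\mathbf a+T_\lambda)\cup\{0\}$; but although $S_\lambda$ is a PI-monoid, it fails to be finitely generated as soon as $\mathrm{pos}(T_\lambda)$ has dimension $\geq 2$, because then the translate $\mathbf a+T_\lambda$ misses an extreme ray of $T_\lambda$ and so $T_\lambda\setminus(\mathbf a+T_\lambda)=\mathrm{PF}(S_\lambda)\cup\{0\}$ is infinite. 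Hence $S_\lambda$ need not be an MPD-semigroup, and a different idea is required.

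Instead I would build $M$ by \emph{puncturing} a finitely generated semigroup sitting inside $S$. Choose $\mathbf s\in S\setminus U$, so $\mathbf s\neq 0$, and set $M_0:=\langle U\cup\{\mathbf s\}\rangle\subseteq S$. A short computation shows that $\mathbf s$ is a minimal generator of $M_0$: if $\mathbf s=\mathbf b_1+\mathbf b_2$ with $\mathbf b_i\in M_0\setminus\{0\}$, then writing $\mathbf b_i=\mathbf u_i+c_i\,\mathbf s$ with $\mathbf u_i\in U$ and $c_i\in\mathbb N$ gives $(1-c_1-c_2)\,\mathbf s=\mathbf u_1+\mathbf u_2$, which is impossible in each of the cases $c_1+c_2=0$ (as $\mathbf s\notin U$), $c_1+c_2=1$ (forcing $\mathbf b_i=0$ for some $i$), and $c_1+c_2\geq 2$. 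Put $M:=M_0\setminus\{\mathbf s\}$. Because $\mathbf s$ is a minimal generator of $M_0$, no sum of two nonzero elements of $M_0$ equals $\mathbf s$, so $M$ is a submonoid of $\mathbb N^d$, and clearly $U\subseteq M\subseteq S$. The decisive point is that $\mathbf s$ becomes a pseudo-Frobenius element of $M$: since $2\mathbf s\in M$ we have $\mathbf s\in\mathrm{pos}(M)$ and hence $\mathbf s\in\mathcal H(M)$ (as $\mathbf s\notin M$), while for every $\mathbf m\in M\setminus\{0\}$ the element $\mathbf s+\mathbf m$ lies in $M_0$ and differs from $\mathbf s$, hence lies in $M$; so $\mathbf s+(M\setminus\{0\})\subseteq M$ and $\mathrm{PF}(M)\neq\varnothing$. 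By Theorem \ref{Th1}, $M$ will be an MPD-semigroup \emph{provided} $M$ is finitely generated.

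That proviso is the genuinely delicate step: removing one minimal generator must leave the semigroup finitely generated. The argument I have in mind is that every minimal generator $\mathbf e$ of $M$ is either a minimal generator of $M_0$ different from $\mathbf s$, or is such that every decomposition of $\mathbf e$ into two nonzero elements of $M_0$ involves $\mathbf s$; in the latter case $\mathbf f:=\mathbf e-\mathbf s\in M_0\setminus\{0\}$, and either $\mathbf f$ is a minimal generator of $M_0$ (so $\mathbf e\in\mathbf s+\mathcal G(M_0)$), or, writing $\mathbf f=\mathbf f_1+\mathbf f_2$ with $\mathbf f_i\neq 0$ and applying the property to $\mathbf e=(\mathbf s+\mathbf f_1)+\mathbf f_2=(\mathbf s+\mathbf f_2)+\mathbf f_1$, one is forced to $\mathbf f_1=\mathbf f_2=\mathbf s$, that is, $\mathbf e=3\mathbf s$. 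Thus the minimal generating set of $M$ is contained in the finite set $(\mathcal G(M_0)\setminus\{\mathbf s\})\cup(\mathbf s+\mathcal G(M_0))\cup\{3\mathbf s\}$, where $\mathcal G(M_0)$ denotes the (finite) minimal generating set of $M_0$; hence $M$ is an affine semigroup, and the self-contained statement is proved.

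It remains to assemble the direct system. Let $\Lambda$ be the set of MPD-semigroups contained in $S$, partially ordered by inclusion, and for $M\subseteq M'$ in $\Lambda$ let $i_{M,M'}: M\to M'$ be the inclusion. Given $M_1,M_2\in\Lambda$, the submonoid $\langle M_1\cup M_2\rangle$ of $S$ is finitely generated; if it equals $S$, then $S$ itself is finitely generated and hence an MPD-semigroup by Corollary \ref{Cor last1}, so we take $M:=S$, and otherwise the statement above yields $M\in\Lambda$ with $\langle M_1\cup M_2\rangle\subseteq M\subseteq S$; either way $M_1,M_2\subseteq M\in\Lambda$, so $\Lambda$ is directed. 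Similarly, every $\mathbf x\in S$ lies in $\langle\mathbf x\rangle$, which is either all of $S$ (again in $\Lambda$ by Corollary \ref{Cor last1}) or is enlarged by the statement to a member of $\Lambda$; hence $\bigcup_{M\in\Lambda}M=S$. Therefore $S=\varinjlim_{M\in\Lambda}M$, a direct limit of MPD-semigroups contained in $S$ with inclusion maps, as required. The one place the PI-monoid hypothesis genuinely enters is, as we have just seen, the appeal to Corollary \ref{Cor last1} that makes a finitely generated sub-join of $S$ automatically an MPD-semigroup.
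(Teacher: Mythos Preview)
Your proof is correct, and it takes a genuinely different route from the paper's. The paper indexes $\Lambda$ by the finite subsets $\lambda$ of $m(S)+\mathrm{PF}(S)$ and sets $S_\lambda=\langle \lambda\cup\{m(S)\}\rangle$; the direct limit equality then follows immediately from Theorem~\ref{Th11} (since $\{m(S)\}\cup(m(S)+\mathrm{PF}(S))$ generates $S$), and the MPD property of each $S_\lambda$ is read off from its Ap\'ery set with respect to $m(S)$ via Proposition~\ref{Prop ApPF}. Your argument instead takes $\Lambda$ to be \emph{all} MPD-semigroups inside $S$ and manufactures members of $\Lambda$ by the ``puncture a minimal generator'' trick: adjoin some $\mathbf s\in S\setminus U$ to a given finitely generated $U\subsetneq S$, then delete $\mathbf s$ to create a pseudo-Frobenius element. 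Your verification that $M_0\setminus\{\mathbf s\}$ is again finitely generated (with generators contained in $(\mathcal G(M_0)\setminus\{\mathbf s\})\cup(\mathbf s+\mathcal G(M_0))\cup\{3\mathbf s\}$) is clean and correct. The trade-off is this: the paper's construction is more explicit and tied to the PI structure (each $S_\lambda$ is visibly built from pseudo-Frobenius data of $S$), whereas your argument is more robust---indeed it shows, essentially for free, that \emph{any} submonoid of $\mathbb N^d$ which is not finitely generated is a direct limit of MPD-semigroups along inclusions, with the PI hypothesis invoked only through Corollary~\ref{Cor last1} to dispose of the residual case where $S$ itself is affine. One small wording quibble: your final sentence suggests Corollary~\ref{Cor last1} applies to arbitrary finitely generated submonoids of $S$, but you only (and correctly) use it when that submonoid coincides with $S$.
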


\begin{proof}
Let $\Lambda = \left\{ \lambda \subset \{m(S) + \mathrm{PF}(S)\}\ \mid\ \lambda\ \text{is finite} \right\},$ partially ordered by inclusion, and define $S_\lambda$ to be the affine semigroup generated by $\lambda \cup \{m(S)\}$. Clearly, we have that $S_\lambda \subseteq S_\mu$ if $\lambda \subseteq \mu$; in this case, let $i_{\lambda \mu} : S_\lambda \to S_\mu$ is the inclusion map. Now, since $S_\lambda \subseteq S$ for every $\lambda \in \Lambda$, we conclude that $S = \varinjlim_{\lambda \in \Lambda} S_\lambda$ by Theorem \ref{Th11}, because
$\{m(S) + \mathrm{PF}(S)\} \cup \{m(S)\}$ is a minimal system of generators of $S$.

Finally, let us see that $S_\lambda$ is a MPD-semigroup for every $\lambda \in \Lambda$. To do that, we first note that $m(S_\lambda) = m(S) \in S_\lambda$, for every $\lambda \in \Lambda$. Let $\mathcal{A} = \{\mathbf{a}_1, \ldots, \mathbf{a}_n\} \subseteq \mathrm{PF}(S)$ and let $\lambda = \{m(S) + \mathcal{A}\} \in \Lambda$. Then, $\mathrm{Ap}(S_\lambda,m(S)) = \{0, \mathbf{a}_1, \ldots, \mathbf{a}_n\}$ is finite, in particular, $\mathrm{maximals}_{\preceq_S}(\mathrm{Ap}(S,m(S)) - m(S)$ is a non-empty finite set. Therefore, by Proposition \ref{Prop ApPF}, $\mathrm{PF}(S) \neq \varnothing$, that is to say, $S_\lambda$ is a MPD-semigroup.
\end{proof}

\medskip
\noindent\textbf{Acknowledgement.}
This paper was originally motivated by a question made of Antonio Campillo and F\'elix Delgado about $\mathcal{C}-$semigroups during a talk of the fourth author at the GAS seminar of the SINGACOM research group. The question is answered in a wider context by Corollary \ref{cor1}. Part of this paper was written during a visit of the second author to the Universidad de C\'adiz (Spain) and to the IEMath-GR (Universidad de Granada, Spain), he thanks these institutions for their warm hospitality. The authors would like to thank Antonio Campillo, F\'elix Delgado and Pedro A. Garc\'{\i}a-S\'anchez for usseful suggestions and comments.


\end{document}